
\documentclass[12pt]{article}
\usepackage{graphicx}
\usepackage{color}
\usepackage{rotating}
\usepackage{amsmath}
\usepackage{amsfonts}
\usepackage{amssymb}
\usepackage{graphics,graphicx}
\usepackage{latexsym,longtable}
\usepackage[latin1]{inputenc}
\usepackage[official]{eurosym}
\usepackage{amsthm}
\usepackage[super]{nth}
 
\usepackage{tikz}
\usetikzlibrary{calc}
\usetikzlibrary{decorations.pathreplacing}
\usetikzlibrary{matrix,shapes.geometric,backgrounds}
\usepackage{pgfplots}
 
\newcommand{\E}{\mathrm{I\!E}}

\newtheorem{thm}{Theorem}[section]

\newtheorem{lem}[thm]{Lemma}
\newtheorem{rem}[thm]{Remark}
\newtheorem{ass}[thm]{Assumption}

\setlength{\textwidth}{6.9in}
\setlength{\oddsidemargin}{-0.25in}
\topmargin -0.5in
\textheight 8.75in

\parskip 2mm
\def\square{\ifmmode\sqr\else{$\sqr$}\fi}
\def\sqr{\vcenter{
\hrule height.1mm
\hbox{\vrule width.1mm height2.2mm\kern2.18mm
\vrule width.1mm}
\hrule height.1mm}}


\newtheorem{proposition}{Proposition}

\newtheorem{lemma}{Lemma}

\newtheorem{corollary}{Corollary}

\newcommand{\R}{\mathbb R}
\newcommand{\N}{\mathbb N}
\newcommand{\paren}[1]{\left(#1\right)}
\newcommand{\prect}[1]{\left[#1\right]}
\newcommand{\chav}[1]{\left\{#1\right\}}
\newcommand{\barra}[1]{\left. #1\right |}

\newcommand{\deriv}[2]{\frac{\partial #1}{\partial #2}}
\newcommand{\derivS}[2]{\frac{\partial ^2 #1}{\partial #2^2}}
\newcommand{\Indicatriz}[1]{\chi _{\chav{#1}}}
 
\newcommand{\h}{h}
 

\begin{document}
 
\title{Analytical solution to an investment problem under uncertainties with shocks
}

\author{\textsc{Cl{\'a}udia Nunes$^{1}$, Rita Pimentel$^{1}$} \\
$^{1}$\textit{CEMAT, T{\'e}cnico Lisboa, Universidade de Lisboa}\\
\textit{Av. Rovisco Pais, 1, 1049-001 Lisboa, Portugal}}
\maketitle
 
\begin{abstract}
We derive the optimal investment decision in a project where both demand and investment costs are stochastic processes, eventually subject to shocks. We extend the approach used in \cite{Dixit:Pindyck:94}, chapter 6.5, to deal with two sources of uncertainty, but assuming that the underlying processes are no longer geometric Brownian diffusions but rather jump diffusion processes. For the class of isoelastic functions that we address in this paper, it is still possible to derive a closed expression for the value of the firm. We prove formally that the result we get is indeed the solution of the optimization problem.
Finally, we derived comparative statistics for the investment threshold with respect to the relevant parameters.
\end{abstract}
%
%

\section{Introduction}
\label{sec:introduction}
 
A real option is the right, but not the obligation, to undertake certain business initiatives, such as deferring, abandoning, expanding, staging or contracting a capital investment project. 
Real options have three characteristics: the decision may be postponed; there is uncertainty about future rewards;
the investment cost is irreversible. There are several types of real options and different kinds of questions, but in all of them there is the issue regarding the optimal timing to undertake the decision (for instance, investment, abandon or switching). Consult \cite{Dixit:Pindyck:94} for a survey on real options analysis.

In this paper we propose a two-fold contribution to the state of the art of real options analysis. On one hand we assume that the uncertainty involving the decision depends on two stochastic processes: the demand  and the investment cost. On the other hand we allow the dynamics of these processes to be a combination of a diffusion part (with continuous sample path) with a jump part (introducing discontinuities in the sample paths). So, the model presented here expands the usual setting of options and adds new features, including the ability to model jumps in the demand process, and including stochastic investment costs.
 
In the early works in finance, regarding both finance options and investment decisions, the underlying stochastic process modeling the uncertainty was frequently assumed to be a continuous process, as a geometric Brownian motion (GBM, for short). There are many reasons for this, but this is largely due to the fact that the normal distribution, as well as the continuous-time process it generates have nice analytic properties \cite{Eberlein:10}.
 
Observations from real situations show that this assumption does not hold. The bar chart in Figure \ref{fig:apple} shows the global sales numbers of iPhones from the $\nth{3}$ quarter of 2007 to the end of 2014, with data aggregated by quarters (source: Statista 2014).
\begin{figure}
\centering
\includegraphics[scale=0.8]{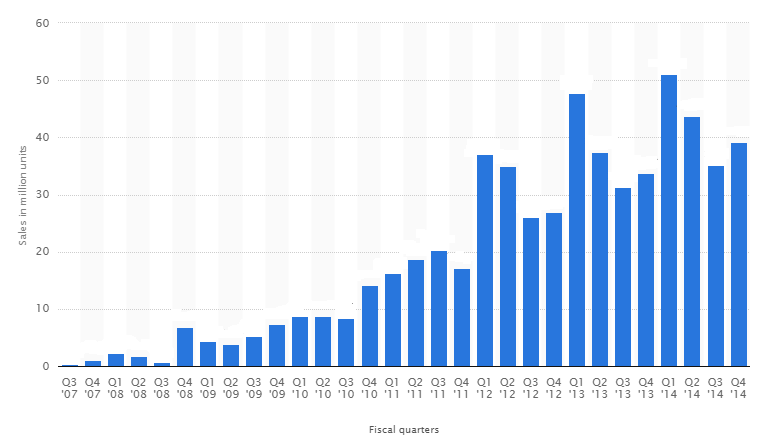}
\caption{Global Apple iPhone sales from $\nth{3}$ quarter 2007 to $\nth{4}$ quarter 2014 (in million units)}
\label{fig:apple}
\end{figure}
It is clear that in this case upward jumps occur with a certain frequency, that may be related with the launching of new versions of the iPhone.
 
Nowadays, with the world global market, there are exogenous events that may lead to a sudden increase or decrease in the demand for a certain product. Clearly, these shocks will considerably affect the decisions of the investors. Also the investment costs depend highly on other factors, such as the oil, raw materials  or labour prices. Therefore, when we assume the investment to be constant and known, we are simplifying the model. This problem is even more important when one considers large investments that take several years to be completed (e.g. construction of dams, nuclear power plants, or high speed railways).
 
The example regarding the number of sales of iPhones shows the existence of jumps in the demand process. In fact, empirical analysis shows there is a need to contemplate models that can take into account discontinuous behavior for a quite wide range of situations. For example, regarding investment decisions, in particular in the case of high-technology companies - where the fast advances in technology continue to challenge managerial decision-makers in terms of the timing of investments \cite{Wu:12} - there is a strong need to consider models that take into account sudden decreases or increases in the demand level \cite{Hagspiel:14}.

Morever, examples regarding jumps in the investment are also easy to find.  The jumps may represent uncertainties about the arrival and impact (on the underlying investment) of new information concerning technological innovation, competition, political risk, regulatory effects and other sources \cite{Martzoukos:02,Wu:07}. It is particularly important to consider jumps process in investment problems related with R\&D investments, as \cite{Koh:07} suggest. Furthermore, when one is deciding about investments in energy, which are long-term investments, one should accommodate the impact of climate change policies effect on carbon prices, as \cite{Yang:07} points out in their report. For instance,  the Horizon 2020 Energy Challenge is designed to support the transition to a secure, clean and efficient energy system for Europe. So, if a company decides to undertake any kind of such investment during this time period, probably will apply to special funds and thus the investment cost will likely be smaller. After 2020, there are not yet rules about the European funding of such projects. Therefore, not only the investment cost is random, but also will suffer a jump due to this new regulatory regime (see http://ec.europa.eu/easme/en/energy for more details).

Following this reasoning, in this paper we assume that the decision regarding the investment depends on two stochastic factors: one related with the demand process, that we will denote by ${\bf X}=\{X_t, t \geq 0\}$, and another one related with the investment cost, ${\bf I}=\{I_t, t \geq 0\}$. We allow that both processes may have discontinuous paths, due to the possibility of jumps driven by external Poisson processes. Later we will introduce in more detail the mathematical model and assumptions.

We also claim that we extend the approach provided in \cite{Dixit:Pindyck:94}, in Chapter 6.5. In this book, the authors consider that there are two sources of uncertainty (the revenue flow and the investment cost), both following a GBM, with (possible) correlated Brownian motions. Then the authors propose a change of variable that will turn this problem with two variables into a problem with one variable, analytically solvable. Here we follow the same approach, but with two different features: first the dynamics of the processes involved are no longer a GBM and, secondly, we prove analytically that the solution we get is in fact exactly the solution of the original problem with two dimensions. In order to prove this result we derive explicitly the quasi-variational inequalities emerging from the principle of dynamic programming, obtaining the corresponding Hamilton-Jacobi-Bellman (HJB) equation. Moreover, using a verification theorem, we prove that the proposed solution is indeed the solution of the original problem \cite{yong:99,Oksendal:03}.

The remainder of the paper is organized as follows. In Section 2 we present the framework rationale and the valuation model, and in Section 3 we formalize the problem as an optimal stopping one, we present our approach and provide the optimal solution. In Section 4 we derive results regarding the behavior of the investment threshold as a function of the most relevant parameters and, finally, Section 5 concludes the work, presenting also some recommendations for future extensions.
 
\section{The model}
 
In this section we introduce the mathematical model along with the assumptions that we use in order to derive the investment policy. For that, we start by presenting the dynamics of the stochastic processes involved, namely, the demand and the investment expenditures processes. We also introduce the valuation model we use to derive the optimal investment problem that we address in this paper.
 
\subsection{The processes dynamics}
 
We assume that there are two sources of uncertainty, that we will denote by $\boldsymbol{X}$ and $\boldsymbol{I}$, representing the demand and the investment, respectively. Moreover, we consider that both have continuous sample paths almost everywhere but occasionally jumps may occur, causing a discontinuity in the path.
 
%
 
 
Generally speaking, let $\boldsymbol{Y} = \chav{Y_t:t\geq 0}$ denote a jump-diffusion process, which has two types of changes: the "normal" vibration, which is represented by a GBM, with continuous sample paths, and the "abnormal", modeled by a jump process, which introduce discontinuities.
 
The jumps occur in random times and are driven by an homogeneous Poisson process, hereby denoted by $\chav{N_t, t \geq 0}$, with rate $\lambda$. Moreover, the percentages of the jumps are themselves random variables, that we denote by $\chav{U_i}_{i \in \N} \overset{i.i.d.}{\thicksim} U$, i.e., although they may be of different magnitude, they all obey the same probability law and are independent from each others. We let $U_i >-1$, allowing for positive and for negative jumps.
 
Thus we may express $Y_t$ as follows:
\begin{equation}\label{processY}
Y_t = y_0 \exp {\prect{\paren{\mu-\frac{\sigma^2}{2}-\lambda m}t+\sigma W_t}} \prod_{i=1}^{N_t}\paren{1+U_i},
\end{equation}
where $y_0$ is the initial value of the process, i.e., $Y_0=y_0$, $\mu$ and $\sigma > 0$ represent, respectively, the drift and the volatility of the continuous part of the process and $\chav{W_t, t \geq 0}$ is a Brownian motion process. Furthermore, the processes $\chav{W_t, t \geq 0}$ and $\chav{N_t, t \geq 0}$ are independent and are also both independent of the sequence of random variables $\chav{U_i}_{i \in \N}$. We use the convention that if $N_t = 0$ for some $t \geq 0$, then the product in equation \eqref{processY} is equal to 1. Note that considering the parameters associated with the jumps equal to zero, we obtain the usual GBM.
 
As both the Brownian motion and the Poisson processes are Markovian, it follows that $\boldsymbol{Y}$ is also Markovian. It is also stationary and its $k^{\text{th}}$ order moments are provided in the next lemma (see Appendix (\ref{moments}) for the proof).
 
\bigskip
 
\begin{lemma}\label{propEY}
For $\boldsymbol{Y} = \chav{Y_t:t\geq 0}$, with $Y_t$ defined as in \eqref{processY}, $t, s \geq 0$ and $k \in \N$,
\begin{equation*}
\E \prect{\barra{ Y_{t+s}^k} {Y_t}} = Y_t^k \; \exp \chav{ {\prect{\paren{\mu+(k-1)\frac{\sigma^2}{2}-\lambda m} k + \lambda \paren{\E \prect{\paren{1+U}^k}-1} }s}},
\end{equation*}
where $\E \prect{\barra{ .} {Y_t}}$ denotes the conditional expectation given the value of the process $\boldsymbol{Y}$ at time $t$.
\end{lemma}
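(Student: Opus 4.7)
The plan is to exploit the independent-increments structure of both the Brownian motion and the compound Poisson component, so that $Y_{t+s}/Y_t$ is independent of $Y_t$ and we can reduce the computation to an unconditional moment.

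First I would write
\begin{equation*}
Y_{t+s} = Y_t \, \exp\!\left[\paren{\mu-\tfrac{\sigma^2}{2}-\lambda m}s + \sigma(W_{t+s}-W_t)\right] \prod_{i=N_t+1}^{N_{t+s}}(1+U_i),
\end{equation*}
where, by convention, the product is $1$ whenever $N_{t+s}=N_t$. The multiplicative factor on the right is built from the Brownian increment $W_{t+s}-W_t$, the Poisson increment $N_{t+s}-N_t$, and the jump sizes $U_i$ with index $i>N_t$, each of which is independent of $\mathcal{F}_t$ (and in particular of $Y_t$) by the independent-increments property of $W$ and $N$ together with the i.i.d.\ hypothesis on $\chav{U_i}$. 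Raising to the $k$-th power and taking conditional expectation, $Y_t^k$ factors out and the remaining expectation is unconditional.

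Next I would separate the two stochastic sources. The Brownian factor contributes the moment generating function of a centered normal with variance $s$, namely
\begin{equation*}
\E\!\prect{\exp\paren{k\sigma(W_{t+s}-W_t)}} = \exp\paren{\tfrac{k^2\sigma^2}{2}s}.
\end{equation*}
For the jump factor, let $M = N_{t+s}-N_t \sim \text{Poisson}(\lambda s)$ and condition on $M$: since the $U_i$ are i.i.d.\ and independent of $M$,
\begin{equation*}
\E\!\prect{\prod_{i=N_t+1}^{N_{t+s}}(1+U_i)^k} = \E\!\prect{\paren{\E\prect{(1+U)^k}}^{M}} = \exp\!\chav{\lambda s\paren{\E\prect{(1+U)^k}-1}},
\end{equation*}
where the last equality is the probability generating function of the Poisson law evaluated at $\E\prect{(1+U)^k}$.

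Finally I would multiply the deterministic exponential $\exp\prect{k(\mu-\sigma^2/2-\lambda m)s}$ with the two expectations above, absorb $k^2\sigma^2/2 - k\sigma^2/2 = k(k-1)\sigma^2/2$ inside the bracket, and regroup the terms to recover the expression in the statement. The only subtlety worth flagging is the careful splitting of the product $\prod_{i=1}^{N_{t+s}}$ into $\prod_{i=1}^{N_t}\prod_{i=N_t+1}^{N_{t+s}}$, together with the use of the independence of the second block from $\mathcal{F}_t$; once this is in place, every remaining step is a routine moment-generating-function or probability-generating-function computation, so I do not anticipate any genuine obstacle.
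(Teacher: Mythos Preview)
Your proposal is correct and follows essentially the same approach as the paper's proof: both split $Y_{t+s}^k$ as $Y_t^k$ times an increment built from $W_{t+s}-W_t$, $N_{t+s}-N_t$, and the fresh jump sizes, invoke the independence and stationarity of these increments to reduce to unconditional expectations, and then apply the normal moment generating function and the Poisson probability generating function (via conditioning on the number of jumps) to evaluate the two factors. Your write-up is in fact a bit more explicit about the independence of the product block $\prod_{i=N_t+1}^{N_{t+s}}(1+U_i)$ from $\mathcal{F}_t$ and about the algebraic recombination of the exponents, but there is no substantive difference in method.
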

 
Returning to the investment problem: in the sequel we assume that both processes $\boldsymbol{X}$ and $\boldsymbol{I}$ follow jump-diffusion processes, such that
%
\begin{equation*}
X_t = x_0 \exp {\prect{\paren{\mu_X-\frac{\sigma_X^2}{2}-\lambda_X m_X}t+\sigma_X W^X_t}} \prod_{i=1}^{N^X_t}\paren{1+U^X_i}
\end{equation*}
and
\begin{equation*}
I_t = i_0 \exp {\prect{\paren{\mu_I-\frac{\sigma_I^2}{2}-\lambda_I m_I}t+\sigma_I W^I_t}} \prod_{i=1}^{N^I_t}\paren{1+U^I_i}.
\end{equation*}
We use an index $X$ or $I$ for each parameter to distinguish between the two processes. Finally, we assume that the both processes are independent from each other.

Next we introduce the valuation model, that defines the production function for the problem that we propose to address.

\subsection{The valuation model}
 
In this paper we propose to solve the question of when to invest in a new project, when both the demand and the investment are stochastic and subject to shocks. Furthermore, we assume that the investment may take some time to really starts to pay back, and therefore there may be a gap of $n$ time units between the decision and the effectiveness of the investment.
 
We also assume that the profit of the firm depends on the stochastic process of the demand, and we use the following notation: $V_0$ ($V_1$) is the profit of the firm before (after) the investment. We consider an isoelastic function:
\begin{equation*}
V_{j}(X_{t}) = \kappa_{j}X_{t}^{\theta},
\end{equation*}
for $j \in \chav{0,1}$, with $\kappa_1 > \kappa_0 > 0$ and $\theta > 0$. So the investment in the new project only changes the model coefficients, $\kappa_0$ and $\kappa_1$, and not the elasticity parameter, $\theta$. Furthermore,
\begin{equation}\label{Dfunction}
D(X_t)=\paren{\kappa_1-\kappa_0} X_{t}^{\theta}
\end{equation}
is the profit's difference before and after the investment.
 
Denoting the risk-neutral discount rate by $\rho$, then the value of the firm if the investment is decided at time $\tau$ is given by:
\begin{equation*}
\int_{0}^{\tau+n} V_{0}(X_{t})e^{-\rho t} dt +\int_{\tau+n}^{+\infty }V_{1}(X_{t})e^{-\rho t}dt - I_{\tau} e^{-\rho \tau},
\end{equation*}
as long as $\tau< + \infty$.
 
The goal of the firm is to maximize its expected value, with respect to the time to decide to invest, that is, the firm seeks to maximize the following expected value
\begin{align*}
&\E\prect{ \left. \int_{0}^{\tau+n} V_{0}(X_{t})e^{-\rho t} dt + \chav{\int_{\tau+n}^{+\infty }V_{1}(X_{t})e^{-\rho t}dt - I_{\tau} e^{-\rho \tau}} \Indicatriz{\tau<+\infty} \right | (X_0=x_0,I_0=i_0)}\\
&= \E^{(x_0,i_0)} \prect{\int_{0}^{\tau+n} V_{0}(X_{t})e^{-\rho t} dt + \chav{\int_{\tau+n}^{+\infty }V_{1}(X_{t})e^{-\rho t}dt - I_{\tau} e^{-\rho \tau}} \Indicatriz{\tau<+\infty}},
\end{align*}
where in the rest of the paper $\E^{(x_0,i_0)}$ denotes the expectation conditional on $(X_0,I_0)=(x_0,i_0)$; moreover $\Indicatriz{A}$ represents the indicator function of the set $A$ and $\tau$ is a stopping time for the filtration generated by the bidimensional process $({\bf X},{\bf I})$.
 
Using \eqref{Dfunction} and simple manipulations, we can re-write the previous expectation as
\begin{equation*}
\E^{(x_0,i_0)} \prect{\int_{0}^{+\infty}V_{0}(X_{t})e^{-\rho t}dt} + \E^{(x_0,i_0)} \prect{\chav{\int_{\tau+n}^{+\infty } D(X_t)e^{-\rho t}dt - I_{\tau} e^{-\rho \tau}} \Indicatriz{\tau<+\infty}}.
\end{equation*}
As the first part of the expression does not depend on the time $\tau$ to invest, then the goal is to maximize the following functional:
\begin{equation*}
J^{\tau}(x_0,i_0) = \E^{(x_0,i_0)} \prect{\chav{\int_{\tau+n}^{+\infty } D(X_t)e^{-\rho t}dt - I_{\tau} e^{-\rho \tau}} \Indicatriz{\tau<+\infty}},
\end{equation*}
which represents the value of the option when the firm exercises it at time $\tau$, with $\tau< + \infty$, given that the initial state is $(x_0,i_0)$. Following \cite{Oksendal:03}, we call $J^{\tau}$ the {\em performance criterion}.
 
Note that in order to have $\tau< + \infty$, one needs to assume that $ \E^{(x_0,i_0)} \prect{\int_{0}^{+\infty} V_{0}(X_{t})e^{-\rho t}dt} < + \infty$, which happens if and only if
\begin{equation*}
\label{h:def}
\h  >0,
\end{equation*}
where
\begin{equation}
\label{def:h}
h= \rho - \paren{\mu_X+(\theta-1) \frac{\sigma_X^2}{2}-\lambda_X m_X} \theta - \lambda_X \paren{\E \prect{\paren{1+U_X}^{\theta}}-1}
\end{equation}
since
\begin{equation*}
\E^{(x_0,i_0)} \prect{\int_{0}^{+\infty} V_{0}(X_{t})e^{-\rho t}dt} = \kappa_{0} x_0^{\theta} \int_{0}^{+\infty} e^{-\h t} dt.
\end{equation*}
From now on, we will be assuming such restriction in the parameters, to avoid trivialities.
%
%
Next we state and prove a result related with the strong Markov property of the involved processes, that we will use subsequently in the optimization problem.
 
\begin{proposition}
The performance criterion can be re-written as follows:
\begin{equation*}
J^{\tau}(x,i) = \mathbb{E}^{(x,i)} \prect{e^{-\rho \tau} {g(X_{\tau},I_{\tau})} \chi_{\chav{\tau < +\infty}}},
\end{equation*}
with
\begin{equation*}\label{g}
{g(x,i)}=\paren{\kappa_1 -\kappa_0} A x^{\theta} - i
\end{equation*}
and
\begin{equation}\label{A}
A = \frac{ e^{-\h n} }{\h}.
\end{equation}
\end{proposition}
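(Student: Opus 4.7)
The plan is to reduce the expression for $J^{\tau}$ to the claimed form in two simple moves: a change of the time variable that pulls the discount factor $e^{-\rho\tau}$ out of the integral, and an application of the strong Markov property (together with Lemma~\ref{propEY}) to evaluate the remaining conditional expectation. The cost term $-I_\tau e^{-\rho\tau}\chi_{\{\tau<+\infty\}}$ is already in the desired form, so it only remains to rewrite the revenue integral.

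First, on the event $\{\tau<+\infty\}$, I would substitute $t=\tau+s$ so that
\begin{equation*}
\int_{\tau+n}^{+\infty}D(X_t)e^{-\rho t}dt = e^{-\rho\tau}\int_{n}^{+\infty}D(X_{\tau+s})e^{-\rho s}ds.
\end{equation*}
Conditioning on $\mathcal{F}_\tau$ and invoking the strong Markov property of the jump-diffusion $\boldsymbol{X}$ (which holds because $\boldsymbol{X}$ solves an SDE driven by independent Brownian and compound Poisson noise), the inner integral, once we push the conditional expectation inside (Fubini is legitimate as I will check below), becomes
\begin{equation*}
\mathbb{E}\!\left[\left.\int_n^{+\infty}D(X_{\tau+s})e^{-\rho s}ds\,\right|\mathcal{F}_\tau\right] = (\kappa_1-\kappa_0)\int_n^{+\infty}e^{-\rho s}\,\mathbb{E}^{X_\tau}\!\left[X_s^\theta\right]ds.
\end{equation*}

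Applying Lemma~\ref{propEY} with $k=\theta$ gives $\mathbb{E}^{X_\tau}[X_s^\theta]=X_\tau^\theta e^{(\rho-h)s}$, where $h$ is the constant defined in \eqref{def:h}. Therefore
\begin{equation*}
(\kappa_1-\kappa_0)\int_n^{+\infty}e^{-\rho s}X_\tau^\theta e^{(\rho-h)s}ds = (\kappa_1-\kappa_0)X_\tau^\theta\int_n^{+\infty}e^{-h s}ds = (\kappa_1-\kappa_0)X_\tau^\theta\frac{e^{-hn}}{h} = (\kappa_1-\kappa_0)AX_\tau^\theta,
\end{equation*}
which is exactly $(\kappa_1-\kappa_0)AX_\tau^\theta$. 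Plugging this back and combining with the $-I_\tau e^{-\rho\tau}$ term yields
\begin{equation*}
J^{\tau}(x,i) = \mathbb{E}^{(x,i)}\!\left[e^{-\rho\tau}\!\left\{(\kappa_1-\kappa_0)AX_\tau^\theta - I_\tau\right\}\chi_{\{\tau<+\infty\}}\right] = \mathbb{E}^{(x,i)}\!\left[e^{-\rho\tau}g(X_\tau,I_\tau)\chi_{\{\tau<+\infty\}}\right].
\end{equation*}

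The only delicate point is the interchange of conditional expectation and the improper integral. Absolute integrability is guaranteed precisely by the standing assumption $h>0$: it makes $\int_n^{+\infty}e^{-hs}ds$ finite and, via Tonelli applied to the non-negative integrand $|D(X_{\tau+s})|e^{-\rho s}$, legitimizes Fubini. Apart from this integrability check, the argument is purely mechanical; the main conceptual ingredient is the strong Markov property, which was already built into the jump-diffusion model in Section~2.1.
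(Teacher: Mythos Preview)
Your proof is correct and follows essentially the same route as the paper: a time shift to factor out $e^{-\rho\tau}$, the strong Markov property together with Fubini, and Lemma~\ref{propEY} to evaluate the conditional moment, yielding $(\kappa_1-\kappa_0)A X_\tau^\theta$. The only cosmetic difference is that the paper shifts by $\tau+n$ (integrating over $[0,\infty)$) whereas you shift by $\tau$ (integrating over $[n,\infty)$); your explicit justification of Fubini via $h>0$ and Tonelli is a welcome addition.
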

\begin{proof}
Applying a change of variable in the performance criterion, we obtain
\begin{equation*}
J^{\tau}(x,i) = \E^{(x,i)} \prect{ e^{-\rho \tau} \chav{\int_{0}^{+\infty } D(X_{\tau + t + n})e^{-\rho (t + n)}dt - I_{\tau}} \Indicatriz{\tau<+\infty}} .
\end{equation*}
Then, using conditional expectation and the strong Markov property, we get
\begin{equation*}
J^{\tau}(x,i) = \mathbb{E}^{(x,i)} \prect{e^{-\rho \tau} \chav{\E \prect{\barra{\int_{0}^{+\infty } D(X_{\tau + t + n})e^{-\rho (t + n)}dt} {(X_\tau,I_\tau)}} - I_{\tau} } \Indicatriz{\tau<+\infty}}.
\end{equation*}
By Fubini's theorem and the independence between $\boldsymbol{X}$ and $\boldsymbol{I}$, it follows that:
\begin{equation}\label{EintD}
\E \prect{\barra{\int_{0}^{+\infty } D(X_{\tau + t + n})e^{-\rho (t + n)}dt } {(X_\tau,I_\tau)}} = \int_{0}^{+\infty } \E \prect{\barra{ D(X_{\tau + t + n})} {X_\tau}} e^{-\rho (t + n)} dt.
\end{equation}
From Lemma \ref{propEY} and from simple calculations, we have:
\begin{equation}\label{ED}
\E \prect{\barra{ D(X_{\tau + t + n})} {X_\tau}} = \paren{\kappa_1-\kappa_0} X_\tau^{\theta} \; e^{(\rho - \h)(t+n)}.
\end{equation}
Plugging \eqref{ED} on \eqref{EintD} results in:
\begin{equation*}
\E \prect{\barra{\int_{0}^{+\infty } D(X_{\tau + t + n})e^{-\rho (t + n)}dt } {(X_\tau,I_\tau)}} = \paren{\kappa_1-\kappa_0} X_\tau^{\theta} \frac{ e^{-\h n} }{\h}.
\end{equation*}
Therefore, in view of the definition of $J^{\tau}$ and $A$, we get the pretended result.
\end{proof}
%

\section{The optimal stopping time problem}
 
The goal of the firm is to find the optimal time to invest in this new project, i.e., for every $x>0$ and $i>0$, we want to find $V(x,i)$ and $\tau^{\star} \in \mathcal{T}$ such that:
\begin{equation*}
V(x,i) = \sup_{\tau \in \mathcal{T}} J^{\tau}(x,i) = J^{\tau^{\star}}(x,i), \quad (x,i) \in \R^+ \times \R^+
\end{equation*}
where $\mathcal{T}$ is the set of all stopping times adapted to the filtration generated by the bidimensional stochastic process $\paren{\boldsymbol{X} , \boldsymbol{I}}$.
The function $V$ is called the \textit{value function}, which represents the optimal value of the firm, and the stopping time $\tau^\star$ is called an \textit{optimal stopping time}.
 
Using standard calculations from optimal stopping theory (see \cite{Oksendal:03}), we derive the following HJB  equation for this problem:
\begin{equation}\label{HJBoriginal}
min \chav{\rho V(x,i) - \mathcal{L} V(x,i) ,V(x,i)-g(x,i)}=0, \quad \forall \: (x,i) \in \R^+ \times \R^+ ,
\end{equation}
where $\mathcal{L}$ denotes the infinitesimal generator of the process $\paren{\boldsymbol{X},\boldsymbol{I}}$, which in view of \cite{Oksendal:03}, it is equal to
\begin{eqnarray*}
\mathcal{L} V (x,i) &=& \frac{\sigma_X^2}{2} x^2 \derivS{V(x,i)}{x} + \frac{\sigma_I^2}{2} i^2 \derivS{V(x,i)}{i}+ \paren{\mu_X-\lambda_X m_X} x \deriv{V(x,i) }{x}+ \paren{\mu_I-\lambda_I m_I} i \deriv{V(x,i) }{i}\\
&& +\lambda_X \prect{\E_{U^X}\paren{V(x(1+U^X),i)} - V(x,i)} +\lambda_I \prect{\E_{U^I}\paren{V(x,i(1+U^I))} - V(x,i)}.
\end{eqnarray*}
Note that in this equation we used the notation $\E_{U^X}$ and $\E_{U^I}$ to emphasize the meaning of such expected values; for instance $\E_{U^I}$ means that we are computing the expected value with respect to the random variable $U^I$. Whenever the meaning is clear from the context, we will simplify the notation and will use just $\E$ instead.

The HJB equation represents the two possible decisions that the firm has.
The first term of the equation corresponds to the decision to continue with the actual project and therefore postpone the investment decision. The second term corresponds to the decision to invest and hence the end of the decision problem. Thus
\begin{equation*}
\mathcal{C} = \chav{(x,i) \in \R^+ \times \R^+ : V(x,i)>g(x,i)}
\end{equation*}
is the set of values of demand and investment cost such it is optimal to continue with the actual project and, for that reason, is usually known as {\em continuation region}, whereas in the {\em stopping region}, $\mathcal{S}$, the value of the firm is exactly the gain if one decides to exercise the option.
The sets $\mathcal{C}$ and $\mathcal{S}$ are complementary on $\R^+ \times \R^+$. We define
\begin{equation*}
\tau^\star= \inf \chav{t \geq 0 : (X_t,I_t) \notin \mathcal{C}},
\end{equation*}
which is the first time the state variables corresponding to the demand and the investment cost are outside the continuation region.
Note that \eqref{HJBoriginal} means that
\begin{equation*}
\rho V(x,i) - \mathcal{L} V(x,i) \geq 0 \; \wedge \;
V(x,i) \geq g(x,i), \; \forall \; (x,i) \in \R^+ \times \R^+ .
\end{equation*}
%
Moreover,
\begin{equation*}
V(x,i) \geq g(x,i) \; \wedge \; \rho V(x,i) - \mathcal{L} V(x,i) = 0, \; \forall \; (x,i) \in \mathcal{C}
\end{equation*}
whereas
\begin{equation*}
V(x,i)=g(x,i) \; \wedge \; \rho g(x,i) - \mathcal{L} g(x,i) \geq 0, \; \forall \; (x,i) \in \mathcal{S} .
\end{equation*}
The solution of the HJB equation, $V$, must satisfy the following initial condition,
\begin{equation}\label{initialCondition}
V(0^+,i)=0, \quad \forall i \in \R^{+},
\end{equation}
which reflects the fact that the value of the firm will be zero if the demand is also zero.
Furthermore, the following value-matching and smooth-pasting conditions should also hold
\begin{equation}\label{boundaryConditions}
V(x,i)=g(x,i) \quad \text{and} \quad
\nabla V(x,i)=\nabla g(x,i)
\end{equation}
for $(x,i) \in \partial \mathcal{C}$, where $\partial \mathcal{C}$ denotes the boundary of $C$, which we call {\em critical boundary}, and $\nabla$ is the gradient function. In this particular case, with two sources of uncertainty, the critical boundary is a threshold curve separating the two regions.
 
So in order to solve the investment problem, we need to find the solution to the following partial differential equation:
\begin{equation}
\label{pde}
\rho V(x,i) - \mathcal{L} V(x,i) = 0
\end{equation}
in the continuation region and, simultaneously, to identify the continuation $\mathcal{C}$ and stopping $\mathcal{S}$ regions.
 
Both questions are challenging: on one hand the solution to \eqref{pde} cannot be found explicitly, and on the other hand we need to guess the form of the continuation region and prove that the solution proposed for the HJB equation satisfies the verification theorem.
Hereafter, we propose an alternative way to solve this problem, that circumvent the difficulties posed by the fact that we have two sources of uncertainty.
 
\subsection{Splitting the state space}
In this section we start by guessing the shape of the continuation region and latter we will prove indeed that our guess is the correct one, using the verification theorem.
 
Our guess about the continuation region comes from intuitive arguments: high demand and low investment cost should encourage the firm to invest, whereas in case of low demand and high investment cost it should be optimal to continue, i.e., postpone the investment decision. See Figure \ref{fig:InitialSpace} for the general plot of both regions.
\begin{center}
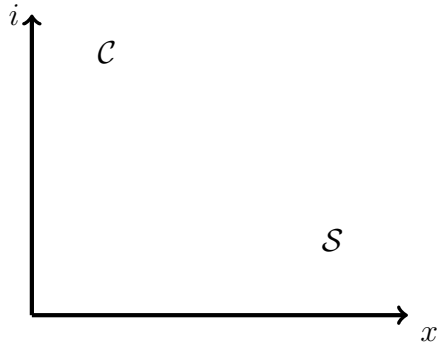
\begin{figure}[h!]
\centering
\begin{tikzpicture}
\draw[thick,->,ultra thick] (0,0) -- (5,0) node[anchor=north west] {$x$};
\draw[thick,->,ultra thick] (0,0) -- (0,4) node[left] {$i$};
\node[align=center] at (4,1) {$\mathcal{S}$};
\node[align=center] at (1,3.5) {$\mathcal{C}$};
\end{tikzpicture}
\caption{Space of process $(\boldsymbol{X},\boldsymbol{I})$, $\mathcal{C}$ (continuation region) and $\mathcal{S}$ (stopping region).}
\label{fig:InitialSpace}
\end{figure}
\end{center}
But we need to define precisely the boundary of $\mathcal{C}$ and for that we use the conditions derived from the HJB equation.
We consider the set
\begin{equation*}
U = \chav{(x,i) \in \R^+ \times \R^+ : \rho g(x,i) - \mathcal{L} g(x,i) < 0 }
\end{equation*}
which, taking into account the expression of $\mathcal{L} $ (the infinitesimal generator) and the $g$ function, can be equivalently expressed (after some simple calculations) as
\begin{equation*}
U = \chav{(x,i) \in \R^+ \times \R^+ : \frac{x^{\theta}}{i} < \frac{\rho - \mu_I}{\paren{\kappa_1 -\kappa_0} A \h} }.
\end{equation*}
By Propositions 3.3 and 3.4 of \cite{Oksendal:03}, we know that $U \subseteq \mathcal{C}$. Further, if $U=\emptyset$ then $V(x,i)=g(x,i), \; \forall \; (x,i) \in \R^+ \times \R^+$ and $\tau^\star=0$. This case would mean that it is optimal to invest right away. So we need to check under which conditions this trivial situation does not hold.
We have $\frac{\rho - \mu_I}{\paren{\kappa_1 -\kappa_0} A \h} \leq 0$ if and only if $\rho - \mu_I \leq 0$, as the denominator is positive; as $\frac{x^{\theta}}{i} > 0$, this would imply that $U=\emptyset$ and $\tau^\star=0$. Therefore, if $ \mu_I \geq \rho$ the firm should invest right away, which is coherent with a financial interpretation: if the drift of the investment cost is higher than the discount rate, the rational decision is to invest as soon as possible.
To avoid such case, we assume that this does not hold, i.e.,
\begin{equation}\label{restriction2}
\rho > \mu_I.
\end{equation}
Next we guess that the boundaries of $U$ and $\mathcal{C}$ have the same shape,  and therefore $\mathcal{C}$ may be written as follows:
\begin{equation*}
\mathcal{C} = \chav{(x,i) \in \R^+ \times \R^+ : 0 < \frac{x^{\theta}}{i} < q^\star},
\end{equation*}
where $q^\star$ is a trigger value that satisfies
\begin{equation}\label{qstarRestricition}
q^\star \geq \frac{\rho - \mu_I}{\paren{\kappa_1 -\kappa_0} A \h}.
\end{equation}
In Figure \ref{fig:U} the thick line represents the boundary of the set $U$ and the other dashed lines are some possible boundaries of the continuation region, $\mathcal{C}$ (as $U \subseteq\mathcal{C}$, which would be the case).
%
\begin{center}
\begin{figure}[h!]
\centering
\begin{tikzpicture}
\draw[thick,->,ultra thick] (0,0) -- (5,0) node[anchor=north west] {$x$};
\draw[thick,->,ultra thick] (0,0) -- (0,4) node[left] {$i$};
\node[align=center] at (1,3.5) {$U$};
\draw[domain=0:2.4,smooth,variable=\x,ultra thick,dashed] plot ({\x},{(1/1.5)*\x*\x});
\draw[domain=0:3.3,smooth,variable=\x,dashed] plot ({\x},{(1/3)*\x*\x});
\draw[domain=0:4.2,smooth,variable=\x,dashed] plot ({\x},{(1/5)*\x*\x});
\end{tikzpicture}
\caption{Set $U$ (bold line) and some possible boundaries of the continuation region, $\mathcal{C}$.}
\label{fig:U}
\end{figure}
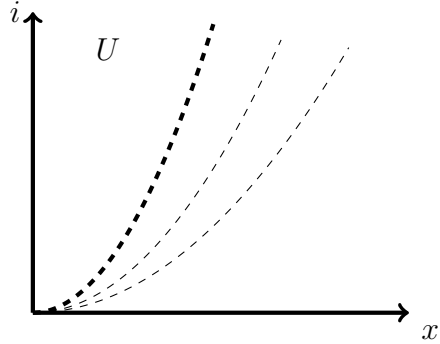
\end{center}

\subsection{Reducing to one source of uncertainty}
 
It follows from the last section that the decision between continuing and stopping depends on the demand level and investment cost only through a function of the two of them:
$$q=\frac{x^{\theta}}{i}.$$
Then the $g$ function present in the performance criteria can also be written in terms of this new variable, as $g(x,i) = i l(q)$, with
\begin{equation}\label{l}
l(q)=\paren{\kappa_1 -\kappa_0} A q -1
\end{equation}
and, in view of \eqref{l}, we propose that $$V(x,i)=i f\paren{q},$$
where $f$ is an appropriate function, that we need to derive still.
 
%
%
%
%
 
In chapter 6.5 of \cite{Dixit:Pindyck:94} is discussed a problem with two sources of uncertainty following a GBM. Using economical arguments, they propose a transformation of the two state variables into a one state variable, and thus reducing the problem to one dimension, that can be solved using a standard approach.
 
In the problem that we address we don't use any kind of economical arguments but we rely on the definition of the set $U$. Furthermore, the state variables that we consider in this paper follow a jump-diffusion process, and thus the dynamics is more general than the one proposed by \cite{Dixit:Pindyck:94}. Therefore, we may see our result as a generalization of the results provided by \cite{Dixit:Pindyck:94}.

 
%
%
 
 
Following the approach proposed in the previous section, we write the original HJB equation now for the case of just one variable:
\begin{equation}\label{HJB1uncert}
min \chav{\rho f(q) - \mathcal{L}_Q f(q) ,f(q) - l(q)}=0, \quad \forall \: q \in \R^+ ,
\end{equation}
where the infinitesimal generator of the unidimensional process $\boldsymbol{Q}=\chav{Q_t=\frac{X_t^{\theta }}{I_t}, t \geq 0}$ is
\begin{eqnarray}
\nonumber
\mathcal{L}_Q f(q) &=& \frac{1}{2} \paren{\theta^2\sigma_X^2 + \sigma_I^2} q^2 f^{\prime \prime}(q) + \prect{\paren{\mu_X + (\theta - 1) \frac{\sigma_X^2}{2} -\lambda_X m_X} \theta-\paren{\mu_I-\lambda_I m_I}} q f^\prime (q)\\
\nonumber
&& + \prect{\paren{\mu_I-\lambda_I m_I}-\paren{\lambda_X + \lambda_I }} f(q) \\
\label{LQ}
&& +\lambda_X \E\prect{ f\paren{q \paren{1+U^X}^{\theta}}} +\lambda_I \E\prect{(1+U^I) f \paren{q \paren{1+U^I}^{-1}}},
\end{eqnarray}
where this equation comes from ${\mathcal L}$ definition, the relationship between $V$ and $f$, and simple but tedious computations.
 
The corresponding continuation and stopping regions (hereby denoted by $\mathcal{C}_Q$ and $\mathcal{S}_Q$, respectively) can now be written as depending only on $q$:
\begin{equation*}
\mathcal{C}_Q = \chav{q \in \R^+: 0 < q < q^\star} \; \; \text{and} \; \; \mathcal{S}_Q = \chav{q \in \R^+: q \geq q^\star},
\end{equation*}
and thus $q^\star$ is the boundary between the continuation and the stopping regions, as it is usually the case in a problem with one state variable. We note that  now we are in a standard case problem of investment with just one state variable (see, for instance, chapter 5 of \cite{Dixit:Pindyck:94}) but with a different dynamics than the usual one (the process $\boldsymbol{Q}$ is not a GBM).

%
Moreover, the following conditions should hold
\begin{equation}\label{initial1uncert}
f(0^+)=0, \quad f(q^\star)=l(q^\star) \quad \text{and} \quad \barra{f^\prime(q)=l^\prime(q)}_{q=q^\star}.
\end{equation}
Next, using derivations familiar to the standard case, we are able to derive the analytical solution to equation \eqref{HJB1uncert}.
\begin{proposition}\label{Prop1Source}
The solution of the HJB Equation \eqref{HJB1uncert} verifying the conditions \eqref{initial1uncert}, hereby denoted by $f$, is given by
\begin{equation}
\label{ffunction}
f(q)=
\begin{cases}
\frac{1}{r_0-1} \paren{\frac{q}{q^\star}}^{r_0} & 0 < q <q^{\star} \\
\paren{\kappa_1 -\kappa_0} A q -1 & q \geq q^{\star}
\end{cases}
\end{equation}
where
\begin{equation}\label{qstar}
q^\star = \frac{r_0}{\paren{\kappa_1 -\kappa_0} A\paren{r_0-1}}
\end{equation}
and $r_0$ is the positive root of the function $j$:
\begin{eqnarray}
\label{j:def}
j(r)&=&\prect{\frac{\sigma_X^2}{2}\theta^2 + \frac{\sigma_I^2}{2}} r^2 + \prect{\paren{\mu_X -\frac{\sigma_X^2}{2}-\lambda_X m_X} \theta-\paren{\mu_I - \frac{\sigma_I^2}{2} -\lambda_I m_I}} r \nonumber \\
&& + \paren{\mu_I-\lambda_I m_I}-\paren{\lambda_X + \lambda_I }-\rho +\lambda_X \E\prect{(1+U^X)^{r \theta}}+\lambda_I \E\prect{(1+U^I)^{1-r}}. \label{j}
\end{eqnarray}
\end{proposition}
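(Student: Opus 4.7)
The strategy is to construct $f$ piecewise from the guessed split of the state space, then verify the HJB inequalities in each piece. On the continuation region $\mathcal{C}_Q = (0,q^\star)$ I look for a solution of the linear equation $\rho f(q) - \mathcal{L}_Q f(q) = 0$; on $\mathcal{S}_Q = [q^\star,\infty)$ I force $f = l$. The free parameters (the multiplicative constant in the continuation piece and the trigger $q^\star$ itself) are then pinned down by the value-matching and smooth-pasting conditions in \eqref{initial1uncert}.

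\textbf{Solving the ODE on $\mathcal{C}_Q$.} First, I would substitute the power ansatz $f(q)=Cq^r$ into $\rho f - \mathcal{L}_Q f = 0$. The key observation is that each term of $\mathcal{L}_Q$ in \eqref{LQ} sends $q^r$ to a scalar multiple of $q^r$, including the jump integrals $\lambda_X\E[(1+U^X)^{r\theta}]q^r$ and $\lambda_I\E[(1+U^I)^{1-r}]q^r$. Dividing through by $q^r$ collapses the equation to the algebraic condition $j(r)=0$. To select the positive root, note that assumption \eqref{restriction2} gives $j(0)=\mu_I-\rho<0$, while $j(r)\to+\infty$ as $r\to+\infty$ (using integrability of the jump distributions), so by the intermediate value theorem a positive root $r_0$ exists. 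The initial condition $f(0^+)=0$ rules out contributions from any non-positive roots, leaving $f(q)=Cq^{r_0}$ on $\mathcal{C}_Q$.

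\textbf{Determining $C$ and $q^\star$.} Next I impose $f(q^\star)=l(q^\star)$ and $f'(q^\star)=l'(q^\star)$, i.e.\ $Cq^{\star r_0}=(\kappa_1-\kappa_0)Aq^\star-1$ and $Cr_0 q^{\star r_0-1}=(\kappa_1-\kappa_0)A$. Eliminating $C$ from the smooth-pasting equation and substituting into value-matching yields $(\kappa_1-\kappa_0)Aq^\star(1-1/r_0)=1$, from which \eqref{qstar} follows; substituting back gives $C=1/[(r_0-1)q^{\star r_0}]$, which is the announced form. (In particular the argument needs $r_0>1$ for $q^\star$ to be positive, a point I would flag by checking $j(1)$ in the regime where the problem is non-trivial.)

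\textbf{Verifying the HJB inequalities.} The last step is to check both branches of the $\min$ are non-negative. On $\mathcal{C}_Q$ one needs $f\ge l$: since $r_0>1$, $f$ is strictly convex while $l$ is affine, they meet tangentially at $q^\star$, and $f(0^+)=0>l(0^+)=-1$, so $f>l$ on $(0,q^\star)$. On $\mathcal{S}_Q$ one needs $\rho l(q)-\mathcal{L}_Q l(q)\ge 0$; a direct computation using the explicit $l$ reduces this to $q\ge (\rho-\mu_I)/[(\kappa_1-\kappa_0)A h]$, which holds by $q\ge q^\star$ together with the trigger bound \eqref{qstarRestricition}. I expect the main obstacle to be this last verification: one must check that our explicit $q^\star$ from \eqref{qstar} actually satisfies \eqref{qstarRestricition}, and also confirm the monotonicity of $\rho l - \mathcal{L}_Q l$ in $q$ so that the inequality, once established at $q^\star$, persists throughout $\mathcal{S}_Q$.
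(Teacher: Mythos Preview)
Your proposal is correct and follows essentially the same route as the paper: power ansatz on $\mathcal{C}_Q$ reducing $\rho f-\mathcal{L}_Q f=0$ to $j(r)=0$, selection of the positive root via $j(0)=\mu_I-\rho<0$, determination of $C$ and $q^\star$ from value-matching/smooth-pasting, and verification of the two HJB branches exactly as you outline (the paper phrases the $f\ge l$ step by showing $\varphi=f-l$ is strictly convex with minimum zero at $q^\star$, which is your tangency argument). Two small remarks: the paper additionally proves \emph{uniqueness} of the positive root by computing $j''>0$, and the obstacle you flag---that the explicit $q^\star$ in \eqref{qstar} must satisfy \eqref{qstarRestricition}---is in fact taken for granted in the paper's proof as well (it does follow, e.g.\ from convexity of $j$ together with $j(0)=\mu_I-\rho$, $j(1)=\sigma_I^2-h$ and $j(r_0)=0$).
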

 
\begin{proof}
For the case $q \geq q^\star$ (corresponding to the stopping region), the value function is equal to $l$, the investment cost, defined on \eqref{l}, and thus the result is trivially proved. We then must prove the result for the continuation region, for which $q<q^\star$.
 
In this case, we propose that the solution of the HJB in the continuation region is of the form $\zeta(q)=b q^{r_0}$, where $b$ and $r_0$ are to be derived.
As $\zeta(0)=0$ (from the condition \eqref{initial1uncert}), it follows that $r_0$ needs to be positive.
 
Furthermore, in view of the definition of $\mathcal{L}_Q$ \eqref{LQ}, if we apply it to the proposed function $\zeta$, it follows that:
\begin{equation*}
\rho \zeta(q) - \mathcal{L}_Q \zeta(q) = - bq^{r_0} j(r_0),
\end{equation*}
where $j$ is defined on \eqref{j}. Therefore, in the continuation region
\begin{equation*}
\rho \zeta(q) - \mathcal{L}_Q \zeta(q) = 0 \Leftrightarrow j(r_0)=0,
\end{equation*}
meaning that $r_0$ is a positive root of $j$. Next we show that $j$ has one and only one positive root and thus $r_0$ is unique.
The second order derivative of $j$ is equal to
\begin{equation*}
j^{\prime \prime}(r)=\sigma_X^2 \theta ^2 + \sigma_I^2 +\lambda_X \theta ^2 \E\prect{ \prect{\ln(1+U^X)}^2 (1+U^X)^{r \theta }} +\lambda_I \E\prect{\prect{\ln(1+U^I)}^2 (1+U^I)^{1-r}}.
\end{equation*}
Given that the volatilities and the elasticity are positive, $U^X>-1$ a.s. and $U^I>-1$ a.s., it follows that $j^{\prime \prime} (r) >0, \; \forall r \in \R$. Thus, $j$ is a strictly convex function. Further, $lim_{r\rightarrow +\infty} j(r) = + \infty$ and $j(0) = \mu_I - \rho<0$ (by assumption \eqref{restriction2}), which means that $j(0) < 0$. Then, since $j$ is continuous, it has an unique positive root.
 
It remains to derive $b$. For that we use conditions presented in \eqref{initial1uncert}, and straightforward calculations lead to:
\begin{equation*}
b = \frac{1}{r_0-1}\paren{\frac{1}{q^\star}}^{r_0},
\end{equation*}
where $q^\star$ must be given by \eqref{qstar}.
In order to ensure that this leads to an admissible solution for $q^{\star}$ (as $q^\star$ must be positive), one must have $r_0>1$.
%
%
%

The remainder of the proof is to check that \eqref{ffunction} is the solution of the HJB Equation \eqref{HJB1uncert}. For that we need
\begin{itemize}
\item[i)] To prove that in the continuation region $f(q) \geq l(q)$.
In the continuation region $f(q) = \frac{1}{r_0 - 1} \paren{\frac{q}{q^\star}}^{r_0}$.
So, we define the function
\begin{equation*}
\varphi(q) = \frac{1}{r_0 - 1} \paren{\frac{1}{q^\star}}^{r_0} q^{r_0} - \paren{\kappa_1 -\kappa_0} A q + 1
\end{equation*}
with derivatives
\begin{equation*}
\varphi^\prime (q) = \frac{r_0}{r_0 - 1} \paren{\frac{1}{q^\star}}^{r_0} q^{r_0-1} - \paren{\kappa_1 -\kappa_0} A \quad \text{and} \quad \varphi^{\prime \prime}(q) = r_0 \paren{\frac{1}{q^\star}}^{r_0} q^{r_0-2}.
\end{equation*}
Since $\varphi^{\prime \prime}(q)>0$, $\varphi$ is a strictly convex function. As $\varphi^\prime (q) = 0 \Leftrightarrow q = q^\star$, then $q^\star$ is the unique minimum of the function $\varphi$ with $\varphi \paren{q^\star} = 0$.
 
Summarizing, the function $\varphi$ is continuous and strictly convex, has an unique minimum at $q^\star$ and $\varphi \paren{q^\star} = 0$, then $\varphi (q) \geq 0, \forall q \in \R^+$.
Therefore, $f(q) \geq l(q)$, for $\forall q \in \R$.
 
%
 
\item[ii)] To prove that in the stopping region $\rho f(q) - \mathcal{L}_Q f(q) \geq 0$.
 
In the stopping region $f(q)=l(q)$, then
\begin{equation*}
\rho f(q) - \mathcal{L}_Q f(q) = \paren{\kappa_1 -\kappa_0} A \h q + \paren{\mu_I - \rho}.
\end{equation*}
Given that $q \geq q^\star$ and using the inequality \eqref{qstarRestricition}, we conclude that
\begin{equation*}
\rho \zeta(q) - \mathcal{L}_Q \zeta(q) \geq \paren{\kappa_1 -\kappa_0} A \h q^\star + \paren{\mu_I - \rho} \geq \paren{\kappa_1 -\kappa_0} A \h \frac{\rho - \mu_I}{\paren{\kappa_1 -\kappa_0} A \h} + \mu_I - \rho=0.
\end{equation*}
\end{itemize}
 

Therefore, we conclude that the function $f$ is indeed the solution of the HJB Equation \eqref{HJB1uncert}.
\end{proof}

We have already seen that in order to avoid the trivial case of investment right away, one must have $\rho > \mu_I $. Moreover, from the previous proof of the proposition, one must also assume that $r_0>1$, which implies that $j(1) <0$, condition equivalent to $\h > \sigma^2_I $, where $h$ is defined in \eqref{def:h}.

\begin{ass}
\label{ass}
We assume that the following conditions on the parameters hold:
\begin{equation*}
\rho > \mu_I \; \; \text{and} \; \; \h > \sigma^2_I .
\end{equation*}
\end{ass}

\subsection{Original problem}
In this section we prove that the solution of the original problem can be obtained from the solution of the modified one.
 
\begin{corollary}
The solution of the HJB Equation \eqref{HJBoriginal} verifying the initial condition \eqref{initialCondition} and the boundary conditions \eqref{boundaryConditions}, hereby denoted by $V$, is given by
\begin{equation}\label{V}
V(x,i)=
\begin{cases}
\prect{\frac{\paren{\kappa_1 -\kappa_0} A x^{\theta}}{r_0}}^{r_0} \paren{\frac{r_0-1}{i}}^{r_0-1} &0 < \frac{x^{\theta}}{i} < q^\star \\
\paren{\kappa_1 -\kappa_0} A x^{\theta} - i & \frac{x^{\theta}}{i} \geq q^\star
\end{cases}
\end{equation}
for $(x,i) \in \R^+ \times \R^+$, where $q^\star$ is defined on \eqref{qstar} and $r_0$ is the positive root of the function $j$, defined on \eqref{j}.
\end{corollary}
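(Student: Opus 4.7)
The plan is to reduce the two-dimensional HJB problem to the one-dimensional problem already solved in Proposition \ref{Prop1Source}, via the ansatz $V(x,i)=if(q)$ with $q=x^{\theta}/i$. First, I would compute the partial derivatives of $V$ in terms of $f$ and $q$ using the chain rule: the key identities are $xV_x=\theta iq f'(q)$, $iV_i=if(q)-iqf'(q)$, $x^2 V_{xx}=i\theta^2 q^2 f''(q)+i\theta(\theta-1)qf'(q)$ and $i^2V_{ii}=iq^2 f''(q)$. For the non-local part, note that $V(x(1+U^X),i)=if\bigl(q(1+U^X)^{\theta}\bigr)$ and $V(x,i(1+U^I))=i(1+U^I)f\bigl(q(1+U^I)^{-1}\bigr)$. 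Substituting everything into $\mathcal{L}$ and collecting the terms in $f''$, $f'$, $f$ and the jump averages, one verifies that
\begin{equation*}
\mathcal{L}V(x,i)=i\,\mathcal{L}_Q f(q),
\end{equation*}
matching the generator \eqref{LQ}. Since $g(x,i)=il(q)$ with $l$ as in \eqref{l}, the two-dimensional HJB \eqref{HJBoriginal} becomes
\begin{equation*}
i\cdot\min\bigl\{\rho f(q)-\mathcal{L}_Q f(q),\,f(q)-l(q)\bigr\}=0,
\end{equation*}
which for $i>0$ is exactly the one-dimensional HJB \eqref{HJB1uncert}.

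Second, I would check that the boundary data transport correctly under the ansatz. The initial condition \eqref{initialCondition} $V(0^+,i)=0$ reduces to $f(0^+)=0$, which is the first condition in \eqref{initial1uncert}. On the critical boundary $q=q^\star$, the value-matching condition $V=g$ becomes $f(q^\star)=l(q^\star)$, and the smooth-pasting equalities $V_x=g_x$, $V_i=g_i$ both follow from the single requirement $f'(q^\star)=l'(q^\star)$, as one sees by writing $\nabla V$ and $\nabla g$ in terms of derivatives of $f$ and $l$. Hence Proposition \ref{Prop1Source} applies and furnishes the unique $f$ satisfying \eqref{HJB1uncert}--\eqref{initial1uncert}, with $q^\star$ as in \eqref{qstar} and $r_0$ the positive root of $j$.

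Third, I would substitute the explicit expression \eqref{ffunction} back into $V(x,i)=if(q)$ to recover the closed form \eqref{V}. In the continuation region,
\begin{equation*}
if(q)=\frac{i}{r_0-1}\left(\frac{q(\kappa_1-\kappa_0)A(r_0-1)}{r_0}\right)^{r_0}=\left[\frac{(\kappa_1-\kappa_0)Ax^{\theta}}{r_0}\right]^{r_0}\!\left(\frac{r_0-1}{i}\right)^{r_0-1},
\end{equation*}
while in the stopping region $if(q)=(\kappa_1-\kappa_0)Ax^{\theta}-i=g(x,i)$, which is \eqref{V}.

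Finally, to conclude that this $V$ is indeed the value function of the original two-dimensional optimal stopping problem, I would invoke the verification theorem (Theorem 10.4.1 of \cite{Oksendal:03}). One needs the standard integrability/regularity hypotheses: $V$ is continuous on $\R^+\times\R^+$, of class $C^2$ off the critical boundary (which has Lebesgue measure zero), and satisfies a growth condition ensuring that the family $\{e^{-\rho\tau}V(X_\tau,I_\tau)\}_\tau$ is uniformly integrable. The polynomial growth of $V$ together with Assumption \ref{ass} and Lemma \ref{propEY} yields the required estimates. The main obstacle in this last step is checking this uniform integrability for jump-diffusion processes, which is handled by the moment bounds in Lemma \ref{propEY} and the assumption $h>\sigma_I^2$, guaranteeing that $V(X_t,I_t)e^{-\rho t}$ is a supermartingale, equal to a martingale up to $\tau^\star=\inf\{t\geq 0:Q_t\geq q^\star\}$.
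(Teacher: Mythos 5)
Your proposal is correct and follows essentially the same route as the paper: the ansatz $V(x,i)=i\,f(x^{\theta}/i)$, the identities $\rho V(x,i)-\mathcal{L}V(x,i)=i\left[\rho f(q)-\mathcal{L}_Q f(q)\right]$ and $V(x,i)-g(x,i)=i\left[f(q)-l(q)\right]$, positivity of $i$, and an appeal to Proposition~\ref{Prop1Source} together with the transported boundary conditions \eqref{initial1uncert}. You merely make explicit two things the paper leaves implicit -- the chain-rule computations behind $\mathcal{L}V=i\,\mathcal{L}_Q f$ and the final verification-theorem/uniform-integrability step identifying the HJB solution with the value function -- and both of these are correctly stated.
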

 
\begin{proof}
Considering the relations of the functions $g$ and $f$ with the function $V$, it follows that the function $V$ proposed in Equation (\ref{V}) is the obvious candidate to be the solution of the HJB Equation \eqref{HJBoriginal}. So next we prove that indeed this is the case. In the following we use some results already presented in proof of Proposition \ref{Prop1Source}.
 
Let $(x,i) \in \R^+ \times \R^+$ and $q=\frac{x^\theta}{i}$.
Using some basic calculations one proves that $\mathcal{L}_Q V(x,i) = i \mathcal{L}_Q f(q)$ and $\rho V(x,i) - \mathcal{L} V(x,i) = i \prect{\rho f(q) - \mathcal{L}_Q f(q)}$.
Since $i$ is positive, the signal of $\rho V(x,i) - \mathcal{L} V(x,i)$ is exactly the signal of $\rho f(q) - \mathcal{L} _Qf(q)$. By construction, $\rho f(q) - \mathcal{L}_Q f(q) = 0$ in the continuation region and $\rho f(q) - \mathcal{L} f(q) \geq 0$ in the stopping region. Therefore, we just need to prove that in the continuation region $V(x,i)-g(x,i) \geq 0$. This follows immediately because $V(x,i)-g(x,i) = i \prect{f(q) - l(q)}$, $i$ is positive and we previously proved that $f(q) \geq l(q)$. 
 
Finally, from the initial and boundary conditions of the transformed problem \eqref{initial1uncert}, $V(0^+,i) = i f(0^+) = 0$ and for $(x^\star,i^\star) \in \partial C$, i.e., $\frac{{x^\star}^\theta}{i^\star}=q^\star$, we obtain the boundary conditions of the original problem \eqref{boundaryConditions}.
\end{proof}

\section{Comparative statics}
\label{statics}
In this section we study the behavior of the investment threshold as a function of the most relevant parameters, in particular the volatilities $\sigma_X$ and $\sigma_I$, the intensity rates of the jumps, $\lambda_X$ and $\lambda_I$, and the magnitude of the jumps, $m_X$ and $m_I$. In order to have analytical results, we assume that the magnitude of the jumps (both in the demand as in the investment) are deterministic and equal to $m_X$ and $m_I$, respectively\footnote{Numerical results suggest that the distribution of the jumps is not very relevant in terms of its effect on the investment thresholds.}.
 
Before we proceed with the mathematical derivations, we comment on the meaning of having $q^\star$ monotonic with respect to a specific parameter, as we are dealing with a boundary in a two dimensions space. For example, assuming that $q^\star$ increases with some parameter $y$, this would mean that $q^\star(y_1)<q^\star(y_2)$, for $y_1<y_2$ (leaving out the other parameters).
Consider then the corresponding continuation regions:
\begin{equation*}
\mathcal{C}_1 = \chav{(x,i) \in \R^+ \times \R^+ : 0 < \frac{x^{\theta}}{i} < q^\star_1}
\end{equation*}
and
\begin{equation*}
\mathcal{C}_2 = \chav{(x,i) \in \R^+ \times \R^+ : 0 < \frac{x^{\theta}}{i} < q^\star_2},
\end{equation*}
with $q_1^\star=q^\star (y_1)$ and $q_2^\star=q^\star (y_2)$.
In Figure \ref{fig:TwoTriggers}
we can observe the graphic representation of the boundaries which divide the stopping and the continuation regions, for each case, and, as expected, we would have $\mathcal{C}_1 \subset \mathcal{C}_2$. It follows from the splitting of the space into continuation and stopping, that you in this illustration, we would stop earlier for smaller values of the parameter.

\begin{center}
\begin{figure}[h!]
\centering
\begin{tikzpicture}
\draw[thick,->,ultra thick] (0,0) -- (5,0) node[anchor=north west] {$x$};
\draw[thick,->,ultra thick] (0,0) -- (0,4) node[left] {$i$};
\node[align=center] at (2.7,3.5) {$\partial \mathcal{C}_1$};
\node[align=center] at (4.6,3.5) {$\partial \mathcal{C}_2$};
\draw[domain=0:2.4,smooth,variable=\x,ultra thick] plot ({\x},{(1/1.5)*\x*\x});
\draw[domain=0:4.2,smooth,variable=\x,ultra thick,dashed] plot ({\x},{(1/5)*\x*\x});
\end{tikzpicture}
\caption{The full (respectively, dashed) line is the boundary of the $\mathcal{C}_1$ (respectively, $\mathcal{C}_2$) set.}
\label{fig:TwoTriggers}
\end{figure}
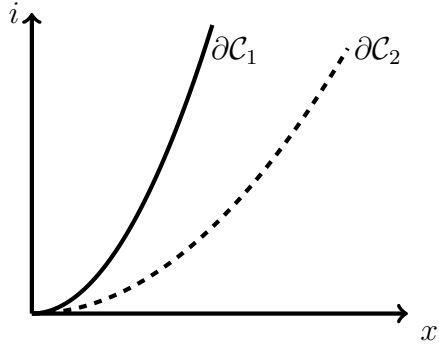
\end{center}

In the previous sections we consider $j$ only as a function of $r$. As in this section we study the behavior of $j$ with respect to the parameters of both processes involved, we change slightly the notation, in order to emphasize the dependency of $j$ in such parameters. Therefore, we define the vectors
$\boldsymbol{\phi_X}=\paren{\mu_X,\sigma_X, \lambda_X, m_X}$, $\boldsymbol{\phi_I}= \paren{\mu_I,\sigma_I, \lambda_I, m_I}$ ,
$\boldsymbol{\varsigma} = \paren{\theta, \rho, \boldsymbol{\phi_X}}$ and finally $\boldsymbol{\varrho} = \paren{\theta, \rho, \boldsymbol{\phi_X}, \boldsymbol{\phi_I}}$, that we use in the following definition:
 
\begin{eqnarray*}
j(r,\boldsymbol{\varrho}) &=& \prect{\frac{\sigma_X^2}{2}\theta^2 + \frac{\sigma_I^2}{2}} r^2 + \prect{\paren{\mu_X -\frac{\sigma_X^2}{2}-\lambda_X m_X} \theta-\paren{\mu_I - \frac{\sigma_I^2}{2} -\lambda_I m_I}} r \\
&& + \paren{\mu_I-\lambda_I m_I}-\paren{\lambda_X + \lambda_I }-\rho +\lambda_X (1+m_X)^{\theta r}+\lambda_I (1+m_I)^{1-r}.
\end{eqnarray*}
Furthermore, we let
$r_0 (\boldsymbol{\varrho} )$ denote the positive root of $j$ and we change accordingly the definition of $h$ as follows:
$$
\h (\boldsymbol{\varsigma}) = \rho - \paren{\mu_X+(\theta-1) \frac{\sigma_X^2}{2}-\lambda_X m_X} \theta - \lambda_X \paren{\paren{1+m_X}^{\theta}-1}
$$
and finally
$$ q^\star (\boldsymbol{\varrho}) = \frac{1}{\paren{\kappa_1-\kappa_0} A (\boldsymbol{\varsigma})} \;\frac{r_0(\boldsymbol{\varrho})}{r_0(\boldsymbol{\varrho})-1},$$
with $A (\boldsymbol{\varsigma})$ defined on \eqref{A}.

We start by proving the following lemma:
\begin{lem}
\label{lemam}
The investment threshold changes with the parameters according to the following relation:
\begin{equation*}
\frac{d q^\star (\boldsymbol{\varrho})}{d y} = \frac{1}{\Delta (\kappa_1-\kappa_0) A (\boldsymbol{\varsigma}) (r_0(\boldsymbol{\varrho})-1)^2} \times \begin{cases}
\frac{\Delta r_0 (\boldsymbol{\varrho})(r_0 (\boldsymbol{\varrho}) -1)(n \h (\boldsymbol{\varsigma}) +1)}{\h (\boldsymbol{\varsigma})}\deriv{\h (\boldsymbol{\varsigma})}{y}+ \barra{\deriv{j(r,\boldsymbol{\varrho})}{y}}_{r=r_0} & \text{if} \; y \in \boldsymbol{\phi_X} \\ \\
\barra{\deriv{j(r,\boldsymbol{\varrho})}{y}}_{r=r_0} & \text{if} \; y \in \boldsymbol{\phi_I}
\end{cases}
\end{equation*}
with $\Delta= \barra{\deriv{j(r,\boldsymbol{\varrho})}{r}}_{r=r_0}$. Furthermore, as $\Delta>0$, then the sign of the derivative of $q^{\star}$ depends only on the derivatives of $h$ and $j$ with respect to the study parameter.
\end{lem}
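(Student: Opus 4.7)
The plan is to differentiate the explicit formula
$$q^\star(\boldsymbol{\varrho}) = \frac{1}{(\kappa_1-\kappa_0) A(\boldsymbol{\varsigma})}\cdot\frac{r_0(\boldsymbol{\varrho})}{r_0(\boldsymbol{\varrho})-1}$$
with respect to $y$ by logarithmic differentiation, splitting the contributions coming from $A$ and from $r_0$. Since $\ln q^\star = -\ln\bigl((\kappa_1-\kappa_0)A\bigr) + \ln r_0 - \ln(r_0-1)$, a direct computation gives
$$\frac{1}{q^\star}\frac{dq^\star}{dy} = -\frac{1}{A}\frac{dA}{dy} - \frac{1}{r_0(r_0-1)}\frac{dr_0}{dy}.$$
Because $A(\boldsymbol{\varsigma})$ depends only on the parameters in $\boldsymbol{\varsigma}=(\theta,\rho,\boldsymbol{\phi_X})$, the term $dA/dy$ vanishes identically when $y\in\boldsymbol{\phi_I}$, which will account for the two cases in the statement.

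Next I would compute the two derivatives separately. For $dr_0/dy$, I would apply implicit differentiation to the defining identity $j(r_0(\boldsymbol{\varrho}),\boldsymbol{\varrho})=0$: this yields
$$\frac{dr_0}{dy} = -\frac{1}{\Delta}\left.\frac{\partial j(r,\boldsymbol{\varrho})}{\partial y}\right|_{r=r_0},$$
with $\Delta=\partial_r j(r,\boldsymbol{\varrho})|_{r=r_0}$. To justify dividing by $\Delta$ and at the same time obtain the sign claim at the end of the lemma, I would invoke the arguments already used in the proof of Proposition \ref{Prop1Source}: $j$ is strictly convex, $j(0)=\mu_I-\rho<0$ under Assumption \ref{ass}, and $j(r_0)=0$, so $r_0$ is the unique crossing from below to above and therefore $\Delta=j'(r_0)>0$. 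For $dA/dy$, using $A=e^{-hn}/h$ a short calculation gives
$$\frac{1}{A}\frac{dA}{dy} = -\frac{nh+1}{h}\frac{\partial h}{\partial y},$$
which is non-zero only for $y\in\boldsymbol{\phi_X}$ since $h(\boldsymbol{\varsigma})$ does not depend on the parameters of $\boldsymbol{I}$.

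Substituting these two derivatives back into the logarithmic-derivative identity, multiplying through by $q^\star=\frac{r_0}{(\kappa_1-\kappa_0)A(r_0-1)}$, and factoring out the common denominator $\Delta(\kappa_1-\kappa_0)A(r_0-1)^2$ should reproduce exactly the two-case formula in the statement, with the $h$-term dropping out when $y\in\boldsymbol{\phi_I}$. The final sentence of the lemma, that the sign of $dq^\star/dy$ is governed only by the signs of $\partial h/\partial y$ and $\partial j/\partial y|_{r=r_0}$, follows since $(\kappa_1-\kappa_0)$, $A$, $h$, $(r_0-1)^2$, $r_0(r_0-1)$, $nh+1$ and $\Delta$ are all positive under Assumption \ref{ass} and the restriction $r_0>1$ established in Proposition \ref{Prop1Source}. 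I do not expect any genuine obstacle here — the result is essentially an implicit-function-theorem computation — the only care required is bookkeeping of which parameter cluster ($\boldsymbol{\varsigma}$ vs.\ $\boldsymbol{\varrho}$) each quantity depends on, so that the two cases emerge cleanly.
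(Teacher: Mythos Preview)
Your proposal is correct and follows essentially the same route as the paper: direct differentiation of the explicit formula for $q^\star$, implicit differentiation of $j(r_0,\boldsymbol{\varrho})=0$ to obtain $dr_0/dy=-\Delta^{-1}\partial_y j|_{r=r_0}$, and the convexity argument for $\Delta>0$. The only cosmetic difference is that you organize the computation via logarithmic differentiation whereas the paper writes out the product-rule derivative directly; the content is identical.
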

\begin{proof}
As
\begin{equation*}
\frac{d q^\star (\boldsymbol{\varrho})}{d y} = \frac{1}{(\kappa_1-\kappa_0) A (\boldsymbol{\varsigma}) (r_0(\boldsymbol{\varrho})-1)^2} \times \begin{cases}
\frac{r_0 (\boldsymbol{\varrho})(r_0 (\boldsymbol{\varrho})-1)(n \h (\boldsymbol{\varsigma})+1)}{\h (\boldsymbol{\varsigma})}\deriv{\h (\boldsymbol{\varsigma})}{y}-\deriv{r_0(\boldsymbol{\varrho})}{y} & \text{if} \; y \in \boldsymbol{\phi_X} \\
-\deriv{r_0(\boldsymbol{\varrho})}{y} & \text{if} \; y \in \boldsymbol{\phi_I}
\end{cases}
\end{equation*}
the result  follows from the implicit derivative relation $\frac{d r_0 (\boldsymbol{\varrho})}{d y} = \barra{- \frac{\deriv{j(r,\boldsymbol{\varrho})}{y}}{\deriv{j(r,\boldsymbol{\varrho})}{r}}}_{r=r_0}$.
 
It remains to prove that $\Delta>0$. As a function of $r$ only, $j$ is continuous and strictly convex, with $lim_{r\rightarrow +\infty} j(r) = + \infty$, $j(0)<0$ and $j(1)<0$, as we have already stated. Since $r_0$ is its positive root, in view of the properties of $j$, it follows that it must be increasing in a neighborhood of $r_0$, and thus $\Delta > 0$.
\end{proof}

In the following sections, we study the sign of each derivative, whenever possible.
Our goal is to inspect if the trigger value, $q^\star$, increases or decreases with each parameter in $\boldsymbol{\phi_X}$ and $\boldsymbol{\phi_I}$.

\subsection{Comparative statics for the parameters regarding the demand}
In this section we state and prove the results concerning the behavior of $q^\star$ as a function of the demand parameters.
Before the main proposition of this subsection, we present and prove a Lemma that provide us the admissible values for the parameters.
 
 
\begin{lem}
\label{lema:restrictions}
In view of the Assumption \ref{ass}, the following restrictions on the demand parameters hold.
The drift is upper bounded, i.e.,$$\mu_X < \frac{1}{\theta}\prect{\rho - \paren{(\theta-1) \frac{\sigma_X^2}{2}-\lambda_X m_X} \theta - \lambda_X \paren{\paren{1+m_X}^{\theta}-1} - \sigma^2_I}.$$
The domain of the volatility parameter depends on $\theta$. If $0<\theta<1$, $\sigma_X$ is lower bounded, $\sigma_X > B$, otherwise is  upper bounded, $0 < \sigma_X < B$, where
$$B = \sqrt{ \max \chav{0,\frac{2}{\theta (\theta -1)} \prect{ \rho - \paren{\mu_X - \lambda_X m_X} \theta - \lambda_X \paren{\paren{1+m_X}^{\theta}-1} -\sigma^2_I}}}.$$
Relatively to the rate of the jumps, the domain also depends on $\theta$. If $0<\theta<1$, $\lambda_X$ is lower bounded, $\lambda_X > C$, otherwise is  upper bounded, $0 < \lambda_X < C$, where
$$C = \max \chav{0, \frac{\rho - \paren{\mu_X + (\theta-1) \frac{\sigma^2_X}{2}}\theta - \sigma^2_I}{(1+m_X)^\theta - (\theta m_X+1)}}. $$
%
%
%
%
Regarding the jump sizes, we have the following: let $\xi_1$ and $\xi_2$ be the negative and positive solutions of the equation
\begin{equation}
\label{zetas}
(1+x)^\theta-(\theta x+1) = \frac{\rho - \paren{\mu_X+(\theta-1)\frac{\sigma^2_X}{2}} \theta - \sigma_I^2}{\lambda_X},
\end{equation}
respectively, when there are solutions. Moreover, let
\begin{equation}
\label{D}
D=\frac{\rho - \paren{\mu_X+(\theta-1)\frac{\sigma^2_X}{2}} \theta - \sigma_I^2}{\lambda_X} .
\end{equation}
For $0 < \theta<1 $,
\begin{itemize}
\item if $D \leq \theta-1$ then $m_X >\xi_2$;
\item if $\theta-1 < D < 0 $ then $m_X \in (-1,\xi_1) \cup (\xi_2,+\infty)$;
\item if $D\geq 0$ then $m_X>-1$.
\end{itemize} For $\theta > 1 $,
\begin{itemize}
\item if $0 < D <\theta-1$ then $m_X \in (\xi_1,\xi_2)$;
\item if $D \geq \theta-1$ then $m_X \in (-1,\xi_2)$ \footnote{The case $D\leq0$ is equivalent to $h\leq \sigma_I^2$, and therefore Assumption \ref{ass} does not hold. },
\end{itemize}
and if $\theta=1$ and $D>0$, then $m_X>-1$.
\end{lem}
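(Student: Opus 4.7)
The plan is to read every bound in the lemma as a transcription of the single inequality $h(\boldsymbol{\varsigma})>\sigma_I^2$ from Assumption~\ref{ass} (the other condition, $\rho>\mu_I$, does not involve demand parameters). The starting point is to rewrite
\begin{equation*}
h(\boldsymbol{\varsigma}) = \rho - \theta\mu_X - \tfrac{\theta(\theta-1)}{2}\sigma_X^2 - \lambda_X\,\psi(m_X),
\qquad \psi(x) := (1+x)^\theta - (\theta x + 1),
\end{equation*}
so that each parameter appears separately; the restriction becomes $\lambda_X\psi(m_X) < K$, where I write $K := \rho - \bigl(\mu_X + (\theta-1)\tfrac{\sigma_X^2}{2}\bigr)\theta - \sigma_I^2$. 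The lemma's quantity $D$ of \eqref{D} is exactly $K/\lambda_X$. With this setup, each of the four bounds is obtained by solving $h>\sigma_I^2$ for the corresponding variable.

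For $\mu_X$ the inequality is linear: isolate $\mu_X$ and divide by $\theta>0$ to get the stated upper bound. For $\sigma_X$, since $\sigma_X^2$ enters with coefficient $-\theta(\theta-1)/2$, I would split on the sign of $\theta-1$. If $\theta>1$ the coefficient is negative, so $\sigma_X$ is bounded above; if $0<\theta<1$ the coefficient is positive, so $\sigma_X$ is bounded below. In both cases the explicit bound is $\sigma_X^2 \lessgtr \tfrac{2}{\theta(\theta-1)}\bigl[\rho - (\mu_X-\lambda_X m_X)\theta - \lambda_X((1+m_X)^\theta-1) - \sigma_I^2\bigr]$, and wrapping a $\max\{0,\cdot\}$ around the bracket (as in the definition of $B$) handles vacuously negative right-hand sides. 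The derivation for $\lambda_X$ follows the same template, after observing by Bernoulli's inequality that $\psi(m_X)\ge 0$ for $\theta\ge 1$ and $\psi(m_X)\le 0$ for $0<\theta\le 1$: hence $\lambda_X$ is upper bounded when $\theta>1$ and lower bounded when $\theta<1$, with the bound $C$ again using $\max\{0,\cdot\}$ to cope with trivial signs.

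The main work, and the step I expect to be the most delicate, is the case analysis for $m_X$. The inequality to be solved is $\psi(m_X) < D$, and the structure of the solution set is dictated by the shape of $\psi$ on $(-1,+\infty)$. I would first compute $\psi(0)=0$, $\psi'(0)=0$, $\psi(-1)=\theta-1$, $\psi''(x) = \theta(\theta-1)(1+x)^{\theta-2}$, and the behaviour at infinity: $\psi\to+\infty$ if $\theta>1$ and $\psi\to-\infty$ if $0<\theta<1$. This gives that $\psi$ is strictly convex with global minimum $\psi(0)=0$ when $\theta>1$, and strictly concave with global maximum $\psi(0)=0$ when $0<\theta<1$. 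In each case the equation $\psi(x)=D$ has at most one root on $(-1,0)$ and at most one root on $(0,\infty)$, which I call $\xi_1$ and $\xi_2$ as in \eqref{zetas}.

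From there the six sub-cases are read off by comparing $D$ with the values $0$ and $\theta-1=\psi(-1)$. For $0<\theta<1$: if $D\ge 0$ then $\psi<D$ on all of $(-1,+\infty)$ (possibly excluding $0$, harmless in the open-interval formulation); if $\theta-1<D<0$ both roots $\xi_1,\xi_2$ exist and the solution set is $(-1,\xi_1)\cup(\xi_2,+\infty)$; if $D\le\theta-1$ only $\xi_2$ is relevant and $m_X>\xi_2$. For $\theta>1$, concavity is replaced by convexity and the roles of the two halves swap: if $0<D<\theta-1$ both roots exist and $m_X\in(\xi_1,\xi_2)$; if $D\ge\theta-1$ only $\xi_2$ is effective and $m_X\in(-1,\xi_2)$. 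The case $D\le 0$ is incompatible with $h>\sigma_I^2$ because then $h=K-\lambda_X\psi(m_X)\le 0\le \sigma_I^2$ using $\psi\ge 0$, which is the content of the footnote. Finally for $\theta=1$, $\psi\equiv 0$ and the constraint collapses to $D>0$, leaving no further restriction on $m_X\in(-1,+\infty)$. Putting all pieces together yields the lemma.
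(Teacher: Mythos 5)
Your proof is correct and follows essentially the same route as the paper: both reduce Assumption \ref{ass} to the single inequality $\lambda_X\psi(m_X)<K$ with $\psi(x)=(1+x)^\theta-(\theta x+1)$ (the paper's $\Upsilon_\theta$) and read off each bound from the sign of the relevant coefficient together with the convexity/concavity of $\psi$ about its tangency at $x=0$, using $\psi(0)=0$ and $\psi(-1)=\theta-1$ for the case split on $m_X$. One cosmetic slip: the identity should read $h-\sigma_I^2=K-\lambda_X\psi(m_X)$ rather than $h=K-\lambda_X\psi(m_X)$, but the conclusion $h\le\sigma_I^2$ when $D\le 0$ is unaffected.
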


\begin{proof}
The sets of admissible values for the parameters $\mu_X$ and $\sigma_X$ come from straightforward manipulation of the second restriction of Assumption \ref{ass}.
 
The results regarding the jumps parameters ($\lambda_X$ and $m_X$) rely on the properties of the function
\begin{equation}
\label{Up}
\Upsilon_\alpha(x)=\Upsilon_{\alpha,2}(x)-\Upsilon_{\alpha,1}(x),
\end{equation}
with $\Upsilon_{\alpha,1}(x)= \alpha x + 1$ and $\Upsilon_{\alpha,2}(x)= (1+x)^\alpha,$
with $x>-1$ and $\alpha>0$. The graph of the function $\Upsilon_{\alpha,1}$ is a tangent line of the graph of the function $\Upsilon_{\alpha,2}$ on the point $(0,1)$. Also, as $\Upsilon^{\prime \prime}_{\alpha,2}(x)= \alpha (\alpha -1) (1+x)^{\alpha-2}$, the function $\Upsilon_{\alpha,2}$ is convex if $\alpha > 1$ and it is concave otherwise. Then, for $x>-1$, $\Upsilon_{\alpha,2}(x)>\Upsilon_{\alpha,1}(x)$ if $\alpha >1$ and $\Upsilon_{\alpha,1}(x) \geq \Upsilon_{\alpha,2}(x)$ if $0<\alpha \leq 1$.
Then it follows that $\Upsilon_\alpha(x)> 0$ if $\alpha>1$ and $\Upsilon_\alpha(x) \leq 0$ if $0<\alpha \leq 1$.
 
With respect to $\lambda_X$, in view of the restriction $h>\sigma_I^2$, $$\lambda_X \Upsilon_\theta(m_X)<\rho-\paren{\mu_X+(\theta-1)\frac{\sigma_X^2}{2}}\theta-\sigma^2_I$$ and therefore the restriction on this parameter follows.
 
Proceeding with the analysis with respect to the jump size, $m_X$, we start by noting that $\Upsilon_\alpha(-1)=\alpha-1$, $\Upsilon_\alpha(0)=0$ and $\lim_{x \rightarrow \infty} \Upsilon_\alpha(x)=+\infty \Indicatriz{\alpha>1}-\infty \Indicatriz{\alpha<1}$. From these properties we know the shape of the function $\Upsilon_\alpha$ for each $\alpha$. The restriction $h>\sigma_I^2$ is equivalent to have $\Upsilon_\theta(m_X)<D$ and therefore standard study of functions lead us to the results presented in the Lemma.
 
\end{proof}
 
Now we are in position to state the main properties of $q^\star$ as a function of the demand parameters. From Lemma \ref{lemam}, we can only get results about the behavior of $q^\star$ as a function of the parameters if $\deriv{\h (\boldsymbol{\varsigma}) }{y}$ and $\barra{\deriv{j(r,\boldsymbol{\varrho})}{y}}_{r=r_0}$ have the same sign. Therefore we are not able to present a comprehensive study of $q^\star$ for all values of $\theta$.
 
\begin{proposition}
For $\frac{1}{r_0(\boldsymbol{\varrho})} \leq \theta \leq 1$, the investment threshold $q^{\star}$:
\begin{itemize}
\item increases with the demand volatility, $\sigma_X$ and jump intensity, $\lambda_X$;
\item is not monotonic with respect to the magnitude of the jumps in the demand, $m_X$.
In fact,
\begin{itemize}
\item if $D \leq \theta-1$ , then $q^\star$ increases for $m_X >\xi_2$;
\item if $\theta-1 < D \leq 0 $, then $q^\star $ decreases for $m_X \in (-1,\xi_1)$ and increases for $ m_X >\xi_2$;
\item if $D>0$, then $q^\star $ decreases for $m_X \in (-1,0) $ and increases for $m_X>0$,
\end{itemize}
where $D$ is defined on \eqref{D} and $\xi_1$ and $\xi_2$ are,respectively, the negative and positive solutions of the Equation \eqref{zetas}, when there are solutions.
\end{itemize}
\end{proposition}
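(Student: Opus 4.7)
The plan is to reduce everything to Lemma~\ref{lemam}. Since Assumption~\ref{ass} and the proof of that lemma give $\Delta>0$, $r_0>1$, and $\h(\boldsymbol{\varsigma})>0$, the prefactor $\Delta\,r_0(r_0-1)(n\h+1)/\h$ multiplying $\partial \h/\partial y$ is strictly positive. Hence, whenever $\partial \h/\partial y$ and $\barra{\partial j/\partial y}_{r=r_0}$ share a common sign, $d q^\star/dy$ inherits that sign. So the whole proof amounts to computing both partial derivatives for each of $y\in\{\sigma_X,\lambda_X,m_X\}$ and verifying that, under $1/r_0\le\theta\le 1$, their signs agree.

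For $y=\sigma_X$, direct differentiation gives $\partial \h/\partial\sigma_X=\theta(1-\theta)\sigma_X$ and $\barra{\partial j/\partial\sigma_X}_{r_0}=\sigma_X\theta r_0(\theta r_0-1)$. The former is nonnegative because $\theta\le 1$, the latter because $\theta r_0\ge 1$. For $y=\lambda_X$ it is convenient to re-use the auxiliary function $\Upsilon_\alpha$ from \eqref{Up}: computation yields $\partial \h/\partial\lambda_X=-\Upsilon_\theta(m_X)$ and $\barra{\partial j/\partial\lambda_X}_{r_0}=\Upsilon_{\theta r_0}(m_X)$. The concavity/convexity argument already established in the proof of Lemma~\ref{lema:restrictions} (namely $\Upsilon_\alpha\le0$ for $0<\alpha\le 1$ and $\Upsilon_\alpha\ge 0$ for $\alpha\ge 1$) then makes both expressions nonnegative exactly on the interval $1/r_0\le\theta\le 1$.

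For $y=m_X$, the two derivatives read
\begin{equation*}
\frac{\partial \h}{\partial m_X}=\lambda_X\theta\prect{1-(1+m_X)^{\theta-1}},\qquad \barra{\frac{\partial j}{\partial m_X}}_{r_0}=\lambda_X\theta r_0\prect{(1+m_X)^{\theta r_0-1}-1}.
\end{equation*}
Under $1/r_0\le\theta\le 1$ we have $\theta-1\le 0\le\theta r_0-1$, so a sign check using $(1+m_X)\gtrless 1$ shows that both quantities vanish at $m_X=0$ and carry the same sign as $m_X$ on either side. By Lemma~\ref{lemam} this makes $d q^\star/dm_X$ positive for $m_X>0$ and negative for $m_X\in(-1,0)$, within the admissible range. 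Combining this with the admissibility intervals for $m_X$ listed in Lemma~\ref{lema:restrictions} (case $D\le\theta-1$: only $m_X>\xi_2>0$; case $\theta-1<D\le 0$: $(-1,\xi_1)\cup(\xi_2,+\infty)$ with $\xi_1<0<\xi_2$; case $D>0$: all of $(-1,+\infty)$) produces the three bulleted sub-cases exactly as stated.

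The main obstacle is the $m_X$ analysis: one has to dovetail the monotonicity regions with the case-split of Lemma~\ref{lema:restrictions} and, in particular, verify that $\xi_1<0<\xi_2$ in each of the regimes where the roots are defined, so that the sign transition of $d q^\star/dm_X$ really occurs at $m_X=0$ and that $m_X=0$ itself is admissible precisely when $D>0$. Once this is checked, the remaining calculations for $\sigma_X$ and $\lambda_X$ are routine differentiation plus the $\Upsilon_\alpha$ bookkeeping already developed earlier.
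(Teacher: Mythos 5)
Your proposal is correct and follows essentially the same route as the paper: reduce to Lemma~\ref{lemam}, compute $\partial \h/\partial y$ and $\bigl.\partial j/\partial y\bigr|_{r=r_0}$ for $y\in\{\sigma_X,\lambda_X,m_X\}$, check that the hypothesis $1/r_0\le\theta\le 1$ forces the two signs to agree, and intersect the $m_X$ sign regions with the admissibility cases of Lemma~\ref{lema:restrictions}. The only cosmetic difference is that the paper packages the $m_X$ sign check into an auxiliary function $\Phi_\alpha(x)=1-(1+x)^\alpha$, whereas you verify the signs directly.
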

 
\begin{proof}
 
The proof for $\sigma_X$ is straightforward knowing the derivatives
\begin{equation*}
\deriv{\h (\boldsymbol{\varsigma})}{\sigma_X}=-\theta (\theta - 1) \sigma_X \; \; \text{and} \; \; \barra{\deriv{j(r,\boldsymbol{\varrho})}{\sigma_X}}_{r=r_0} = \theta r_0 (\boldsymbol{\varrho}) (\theta r_0 (\boldsymbol{\varrho}) - 1) \sigma_X .
\end{equation*}
 
Continuing with $\lambda_X$, we calculate the derivatives
\begin{align*}
\deriv{\h (\boldsymbol{\varsigma})}{\lambda_X}&= \theta m_X + 1 - (1+m_X)^\theta=-\Upsilon_\theta(m_X) \\ \barra{\deriv{j(r,\boldsymbol{\varrho})}{\lambda_X}}_{r=r_0} &= (1+m_X)^{\theta r_0 (\boldsymbol{\varrho})}- \paren{\theta r_0 (\boldsymbol{\varrho}) m_X + 1}=\Upsilon_{\theta r_0 (\boldsymbol{\varrho})}(m_X),
\end{align*}
with $\Upsilon_\alpha$ given by \eqref{Up}. Taking into account the properties of such function, we conclude the result for $\lambda_X$.
 
Finally, for $m_X$ we also calculate the derivatives
\begin{align*}
\deriv{\h (\boldsymbol{\varsigma})}{m_X}&= \theta \lambda_X \prect{1 -(1+m_X)^{\theta-1}}=\theta \lambda_X \Phi_\theta(m_X)\\
\barra{\deriv{j(r,\boldsymbol{\varrho})}{m_X}}_{r=r_0} &= - \theta r_0 (\boldsymbol{\varrho}) \lambda_X \prect{1 -(1+m_X)^{\theta r_0 (\boldsymbol{\varrho})-1}}=-\theta r_0 (\boldsymbol{\varrho}) \lambda_X \Phi_{\theta r_0 (\boldsymbol{\varrho})}(m_X)
\end{align*}
where
\begin{equation}
\label{phifunction}
\Phi_\alpha (x) = 1-(1+x)^\alpha,
\end{equation}
with $x>-1$ and $\alpha>0$. As $\Phi_\alpha^\prime(x)=-\alpha (1+x)^{\alpha -1}$, $\Phi_\alpha$ is decreasing if $\alpha>0$, constant and equal to zero if $\alpha=0$, and increasing if $\alpha < 0$. Moreover, as $\Phi_\alpha (0)=0$, then:
\begin{itemize}
\item if $\alpha>0$, $\Phi_\alpha$ is positive when $-1<x<0$ and is negative for $x>0$;
\item if $\alpha<0$, $\Phi_\alpha$ is negative when $-1<x<0$ and positive for $x>0$.
\end{itemize}
Given these results, the proof regarding $m_X$ is concluded.
\end{proof}

\bigskip
\begin{rem}
Numerical examples show us that the investment threshold $q^\star$ is not monotonic with respect to the demand's drift, $\mu_X$. We cannot have analytical results for the behavior of $q^\star$ with respect to $\mu_X$ as the signs of the involved derivatives are opposite: $\deriv{\h (\boldsymbol{\varsigma})}{\mu_X}=-\theta$ and $\barra{\deriv{j(r,\boldsymbol{\varrho})}{\mu_X}}_{r=r_0}=\theta r_0 (\boldsymbol{\varrho})$.
\end{rem}
 
\subsection{Comparative statics for the parameters regarding the investment}
Now we state and prove the results concerning the behavior of $q^\star$ as a function of the investment parameters. In view of Lemma \ref{lemam}, we just need to assess the sign of the derivative of $j$ with respect to each one of the parameters.
\begin{proposition} For values of investment parameters such that Assumption \ref{ass} holds, the investment threshold $q^{\star}$
\begin{itemize}
\item decreases with the investment drift $\mu_I$;
\item increases with the investment volatility, $\sigma_I$ and jumps intensity, $\lambda_I$;
\item is not monotonic with respect to the magnitude of the jumps in the intensity; it
decreases when $m_I \in (-1,0)$ and increases when $m_I>0$.
\end{itemize}
\end{proposition}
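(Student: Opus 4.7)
The plan is to apply Lemma~\ref{lemam}. Since every parameter in the statement lies in $\boldsymbol{\phi_I}$, and $h(\boldsymbol{\varsigma})$ has no dependence on $\boldsymbol{\phi_I}$, that lemma reduces each case to determining the sign of $\left.\partial j(r,\boldsymbol{\varrho})/\partial y\right|_{r=r_0}$ for $y\in\{\mu_I,\sigma_I,\lambda_I,m_I\}$: the multiplicative factor $1/[\Delta(\kappa_1-\kappa_0)A(\boldsymbol{\varsigma})(r_0-1)^2]$ is strictly positive because $\Delta>0$ was already established inside Lemma~\ref{lemam}, and the remaining factors are positive by construction. Throughout I will use that Assumption~\ref{ass} forces $r_0>1$, as remarked just after Proposition~\ref{Prop1Source}.

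For $\mu_I$ and $\sigma_I$ the derivatives of $j$ are elementary, yielding $\left.\partial_{\mu_I} j\right|_{r_0}=1-r_0<0$ and $\left.\partial_{\sigma_I} j\right|_{r_0}=\sigma_I\,r_0(r_0+1)>0$, which immediately give the monotonic decrease and increase claimed for these two parameters.

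For $\lambda_I$ and $m_I$ I would collect the few terms of $j$ that depend on each parameter, differentiate, and then rewrite the outcome using the auxiliary functions $\Upsilon_\alpha$ and $\Phi_\alpha$ already introduced in \eqref{Up} and \eqref{phifunction}. The calculation is expected to produce $\left.\partial_{\lambda_I} j\right|_{r_0}=\Upsilon_{1-r_0}(m_I)$ and $\left.\partial_{m_I} j\right|_{r_0}=\lambda_I(r_0-1)\,\Phi_{-r_0}(m_I)$. Since $\lambda_I(r_0-1)>0$, the two remaining monotonicity statements reduce to signing $\Upsilon_{1-r_0}(m_I)$ and $\Phi_{-r_0}(m_I)$ on $m_I\in(-1,\infty)$.

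The main obstacle, modest but genuine, is that both auxiliary functions are now evaluated at \emph{negative} exponents ($1-r_0<0$ and $-r_0<0$), whereas the sign analyses carried out in Lemma~\ref{lema:restrictions} and in the $m_X$ part of the previous proposition only covered positive ones. The extension, however, is the same convexity argument: for $\alpha<0$ one has $\Upsilon''_{\alpha,2}(x)=\alpha(\alpha-1)(1+x)^{\alpha-2}>0$, so $\Upsilon_{\alpha,2}$ is strictly convex on $(-1,\infty)$ and lies above its tangent line $\Upsilon_{\alpha,1}$ at $x=0$, giving $\Upsilon_{1-r_0}(m_I)>0$ for every $m_I\in(-1,\infty)\setminus\{0\}$ and hence the increase in $\lambda_I$; and for $\alpha<0$, $\Phi'_\alpha(x)=-\alpha(1+x)^{\alpha-1}>0$, so $\Phi_\alpha$ is strictly increasing with $\Phi_\alpha(0)=0$, giving the sign pattern ``negative on $(-1,0)$, positive on $(0,\infty)$'', which yields exactly the dichotomy stated for $m_I$.
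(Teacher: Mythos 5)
Your proposal is correct, and its skeleton is the same as the paper's: invoke Lemma~\ref{lemam}, note that for $y\in\boldsymbol{\phi_I}$ only the sign of $\left.\partial j/\partial y\right|_{r=r_0}$ matters (since $\Delta>0$ and $h$ does not involve the investment parameters), and then sign the four derivatives using $r_0>1$. Your computations for $\mu_I$, $\sigma_I$ and $m_I$ coincide with the paper's (the paper also reduces the $m_I$ case to $\lambda_I(r_0-1)\Phi_{-r_0}(m_I)$ and already records the sign of $\Phi_\alpha$ for $\alpha<0$ in the proof of the preceding proposition, so that part needs no new extension). The one genuine divergence is the $\lambda_I$ case: you fix $r=r_0$ and sign $\Upsilon_{1-r_0}(m_I)$ by convexity of $x\mapsto(1+x)^{1-r_0}$ in $x$ (a tangent-line argument at $x=0$ with a negative exponent, which indeed is not covered by Lemma~\ref{lema:restrictions} and which you supply correctly), whereas the paper fixes $m_I$ and studies $\Theta(r)=\partial j/\partial\lambda_I$ as a function of $r$, using that $\Theta$ is strictly convex with $\Theta(0)=\Theta(1)=0$, hence positive for $r>1$. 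Both are two-line convexity arguments and both are valid; the paper's has the small aesthetic advantage of reusing the ``roots at $0$ and $1$'' structure without introducing $\Upsilon_\alpha$ at negative $\alpha$, while yours keeps all four cases uniformly phrased in terms of the auxiliary functions $\Upsilon_\alpha$ and $\Phi_\alpha$ evaluated at $m_I$. (In either formulation the derivative degenerates to zero when $m_I=0$, i.e.\ when jumps have no effect, which is harmless.)
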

\begin{proof}
The proof for $\mu_I$ and $\sigma_I$ is trivial.
In order to pove the result for $\lambda_I$, we study the behavior of the function $\Theta$:
\begin{eqnarray*}
\Theta(r) = \deriv{j(r,\boldsymbol{\varrho})}{\lambda_I}= m_I (r-1) + (1+m_I)^{1-r} - 1.
\end{eqnarray*}
We compute the second derivative,
\begin{equation*}
\Theta^{\prime \prime} (r) = \prect{\ln (1+m_I)}^2 (1+m_I)^{1-r} >0,
\end{equation*}
whence $\Theta$ is a strictly convex function. Furthermore, $\Theta(0)=\Theta(1)=0$, which lead us to conclude that $\Theta(r)>0$, for $r>1$. Therefore, $\barra{\deriv{j(r,\boldsymbol{\varrho})}{\lambda_I}}_{r=r_0}=\Theta(r_0 (\boldsymbol{\varrho}) )>0$, and the result follows.
 
Finally, in order to study the behavior of $q^\star$ with respect to $m_I$, we need to assess the sign of the following derivative:
\begin{eqnarray*}
\barra{\deriv{j(r,\boldsymbol{\varrho})}{m_I}}_{r=r_0}&=& \lambda_I (r_0 (\boldsymbol{\varrho})-1)\prect{1-(1+m_I)^{-r_0 (\boldsymbol{\varrho})}}.
\end{eqnarray*}
Taking into account the $\Phi_\alpha$ function defined on \eqref{phifunction}, we notice that $\barra{\deriv{j(r,\boldsymbol{\varrho})}{m_I}}_{r=r_0}=\lambda_I (r_0 (\boldsymbol{\varrho})-1) \Phi_{-r_0 (\boldsymbol{\varrho})}(m_X)$.
As $r_0 (\boldsymbol{\varrho})>1$ and in view of the properties of function $\Phi_\alpha$, the result follows.
\end{proof}

\section{Conclusion}
In this paper we proposed a new way to derive analytically the solution to a decision problem with two uncertainties, both following jump-diffusion processes. This method relies on a change of variable that reduces the problem from a two dimensions problem to a one dimension, which we can solve analytically.
 
Moreover, we also presented an extensive comparative statics. From such analysis, we have concluded that the behavior of the investment threshold with respect to demand and investment parameters is similar, i.e., the impact of changing one parameter in the demand process is qualitatively the same as changing the same parameter in the investment process. This result holds for all except for the drift: the investment threshold is monotonic with respect to the investment drift but this does not hold for the demand drift.
 
For some parameters, the result is as expected, and similar to the one derived in earlier works (like the behavior of the investment threshold with the volatility of both the demand and the investment), whereas in other cases the results are not so straightforward. For instance, the impact of the jump sizes of the demand depend on some analytical condition whose economical interpretation is far from being obvious.
 
Our method may be extended for more general profit functions. In our research agenda we plan to study properties of the functions such that the approach proposed in this paper will still be valid and useful.

\section{Proofs}
\label{sec::proofs}
 
\subsection{Proof of Lemma \ref{propEY}}
\label{moments}
\begin{proof}
Applying expectation and re-writting $Y_{t+s}$, we have
\begin{equation*}
\E \prect{\barra{ Y_{t+s}^k} {Y_t}} = Y_t^k \exp \prect{\paren{\mu-\frac{\sigma^2}{2}-\lambda m} k s} \; \E \prect{\barra{\exp \prect{\sigma k \paren{W_{t+s} - W_t}} \prod_{i=N_t+1}^{N_{t+s}}\paren{1+U_i}^k} {Y_t}}.
\end{equation*}
Using the fact that the Brownian motion and the Poisson process have independent and stationary increments, and that the processes are independent, we obtain
\begin{equation}\label{EV}
\E \prect{\barra{ Y_{t+s}^k} {Y_t}} = Y_t^k \exp \prect{\paren{\mu-\frac{\sigma^2}{2}-\lambda m} k s} \E \prect{\exp \paren{\sigma k W_s}} \E \prect{\prod_{i=1}^{N_s}\paren{1+U_i}^k}.
\end{equation}
As $\chav{U_i}_{i \in \N} \overset{i.i.d.}{\thicksim} U$, then using the probability generating function of a Poisson distribution and the tower property of the conditional expectation, we conclude that
\begin{equation}\label{pgfP}
\E \prect{\prod_{i=1}^{N_s}\paren{1+U_i}^k} = \exp \prect{\lambda s \paren{\E \prect{\paren{1+U}^k}-1}}.
\end{equation}
The result follows using the moment generating function of the Normal distribution, and replacing \eqref{pgfP} in \eqref{EV}.
\end{proof}

\end{document}